\newtheorem{theorem}{Theorem}[section]
\newtheorem{lemma}[theorem]{Lemma}
\newtheorem{definition}[theorem]{Definition}
\newcommand{\forceP}{\mathbb{P}}
\newcommand{\forceQ}{\mathbb{Q}}
\newcommand{\forceR}{\mathbb{R}}
\newcommand{\ZFC}{\mathsf{ZFC}}
\newcommand{\ZFP}{\mathsf{ZF}^-}
\newcommand{\PFA}{\mathsf{PFA}}
\newcommand{\BPFA}{\mathsf{BPFA}}
\newcommand{\CH}{\mathsf{CH}}
\newcommand{\PD}{\mathsf{PD}}
\newcommand{\Lim}{\mathop{\textrm{Lim}}}
\newcommand{\Cof}{\textrm{Cof}}
\newcommand{\mbp}{\mathbb{P}}
\def\undertilde#1{\mathord{\vtop{\ialign{##\crcr
$\hfil\displaystyle{#1}\hfil$\crcr\noalign{\kern1.5pt\nointerlineskip}
$\hfil\tilde{}\hfil$\crcr\noalign{\kern1.5pt}}}}}
\title{The global $\Sigma^1_{n+1}$-Uniformization Property and $\BPFA$} 
\author{ Stefan Hoffelner\footnote{Supported by the Deutsche Forschungsgemeinschaft (DFG German Research Foundation) under Germanys Excellence Strategy EXC 2044 390685587, Mathematics M\"unster: Dynamics-Geometry-Structure.}  }
\begin{document}

\maketitle

\begin{abstract}
We show  that given a reflecting cardinal, one can produce a model of $\mathsf{BPFA}$ where the $\Sigma^1_n$-uniformization property holds simultaneously for all $n \ge 2$.
\end{abstract}

\section{Introduction}

Given $A \subset \omega^{\omega} \times \omega^{\omega}$, we say that $f$ is a uniformization (or a uniformizing function) of
$A$ if there is a function $f$ such that 
\[dom(f)= pr_1(A)= \{ x \in 2^{\omega} \, : \, \exists y ((x,y) \in A \} \] and the graph of $f$ is a subset of $A$.
\begin{definition}[Uniformization Property]
 We say that the projective pointclass $\Gamma \in \{\Sigma^1_n \mid n \in \omega\} \cup \{\Pi^1_n \mid n \in \omega\}$ has the uniformization
 property iff every $\Gamma$-set in the plane admits a uniformization 
 whose graph is in  $\Gamma$, i.e. the relation $(x,y) \in f$ is in $\Gamma$.
 \end{definition}

It is a classical result due to M. Kondo that lightface $\Pi^1_{1}$-sets do have the 
uniformization property, this also yields the uniformization property for $\Sigma^1_{2}$-sets. This is all $\ZFC$ can prove about the uniformization property of projective sets. In the constructible universe $L$, for every $n \ge 3$, $\Sigma^1_{n}$ does have the uniformization property which follows from the existence of a good wellorder by an old result of Addison (see \cite{Addison}). Recall that a $\Delta^1_n$-definable wellorder $<$ of the reals is a \emph{good} $\Delta^1_n$-wellorder if $<$ is of ordertype $\omega_1$ and the relation $<_{I} \subset (\omega^{\omega})^2$ defined via
\[ x <_I y \Leftrightarrow \{(x)_n \, : \, n \in \omega\}= \{z \, : \, z < y\} \]
where $(x)_n$ is some fixed recursive partition of $x$ into $\omega$-many reals, is a $\Delta^1_n$-definable. It is easy to check that the canonical wellorder of the reals in $L$ is a good $\Delta^1_2$-wellorder so the $\Sigma^1_n$-uniformization property follows for $n \ge 2$. The very same argument also shows that inner models of the form $L[U]$ for $U$ a normal, $\kappa$-complete ultrafilter on $\kappa$ share the same $L$-like pattern for uniformization.

On the other hand, large cardinal assumptions draw a very different picture.  Due to the celebrated result of Moschovakis (see \cite{Kechris} 39.9), $\PD$ implies that $\Pi^1_{2n+1}$ and $\Sigma^1_{2n+2}$-sets have the uniformization property for $n >1$. By the famous Martin-Steel result (see \cite{MS}, Theorem 13.6.), the assumption of infinitely many Woodin cardinals outright implies $\PD$, and hence large cardinals fully settle the behaviour of the uniformization property within the projective hierarchy.

The connection of $\PD$ with forcing axioms is established via core model induction. Under the assumption of the proper forcing axiom, Schimmerling, Steelt and Woodin showed  that $\PD$ is true (in fact much more is true, see \cite{Steel}), thus also under $\PFA$ the $\Pi^1_{2n+1}$-uniformization holds for $n>1$. 
As the uniformization property for one pointclass rules out the uniformization property of the dual pointclass, the behaviour of sets of reals in $L$ and under $\PFA$ contradict each other.

It is natural to investigate uniformization in the presence of weaker forcing axioms, the bounded proper forcing axiom $\BPFA$ (introduced by Goldstern-Shelah, see \cite{GS})  being the paradigmatic example together with Martin's Axiom. 
A first step was taken in \cite{Ho1}, where it is shown that $\BPFA$ and ``$\omega_1$ is not inaccessible to reals$"$ outright imply that the boldface $\bf{\Sigma}^1_3$-uniformization property holds.
However
Addisons result from 1959 has been the only tool to obtain the $L$-like global uniformization pattern. As a consequence the known universes which satisfy the $\Sigma^1_n$ uniformization property for all $n>1$ are quite special, in particular they must satisfy $\CH$, hence $\BPFA$ fails in such universes.

The main goal of this paper is to produce a new method which will force the $L$-like uniformization pattern. We will introduce a family of $\Sigma^1_n$-predicates whose truth value can be switched from false to true using carefully designed forcings which are $\omega_1$-preserving but not proper. These predicates in turn will be used to define an iterated forcing construction yielding a universe in which the $\Sigma^1_n$-uniformization property holds for each $ n \ge 2$ simultaneously. The definitions of  the uniformizing functions are robust under additional usage of proper forcings, which opens the possibility of forcing $\BPFA$ simultaneously. 
\begin{theorem}
Working in $L$, assuming the existence of a reflecting cardinal, there is a generic extension of $L$ in which $\BPFA$ and the $\Sigma^1_n$-uniformization property for every $n \ge 2$ are true.
\end{theorem}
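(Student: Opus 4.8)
The plan is to construct, over $L$, a single iterated forcing $\mathbb{P} = \langle \mathbb{P}_\alpha, \dot{\mathbb{Q}}_\alpha : \alpha < \kappa \rangle$ of length the reflecting cardinal $\kappa$, doing two jobs at once: along a cofinal set of stages it runs the Goldstern--Shelah iteration that forces $\BPFA$, and along another cofinal set of stages it runs ``coding'' forcings which, for each $\Sigma^1_n$ subset of the plane in the final model, write a $\Sigma^1_n$ uniformizing function into the reals. The device that makes this possible is, for each $n \ge 2$, a $\Sigma^1_n$ ``flag'' formula $\varphi_n(r)$ on reals, together with a family of forcings $\mathbb{Q}_{n,r}$ with three properties: each $\mathbb{Q}_{n,r}$ is $\omega_1$-preserving but not proper and forces $\varphi_n(r)$; once $\varphi_n(r)$ is true it stays true through all further steps of the iteration, in particular through proper forcings; and $\varphi_n(r)$ is never made true ``by accident'', i.e.\ along $\mathbb{P}$ no real acquires the flag unless the corresponding coding forcing was deliberately run for it. Granting these, one fixes a single $\Sigma^1_n$ formula $\Psi_n(a,x,y)$ saying, roughly, ``there is a real $r$ coding the pair $(x,y)$ which witnesses $(a,x,y) \in U_n$ (the universal $\Sigma^1_n$ set) and satisfies $\varphi_n(r)$''; if the bookkeeping guarantees that every code $a$ of a $\Sigma^1_n$ set with nonempty domain is eventually handled, then $(\Psi_n)_a$ uniformizes $(U_n)_a$ for every real $a$, giving the $\Sigma^1_n$-uniformization property.

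First I would fix the combinatorics behind $\varphi_n$ and $\mathbb{Q}_{n,r}$. One sets up in $L$ a definable scaffolding --- for instance a definable partition of $\omega_1$ into stationary pieces, reindexed so that a real points to a block of pieces --- and, given a real $a$ naming a $\Sigma^1_n$ set $A = (U_n)_a$ with $\operatorname{dom}(A) \neq \emptyset$, one picks for each $x \in \operatorname{dom}(A)$ some $y_x$ with $(x,y_x) \in A$ and codes the function $x \mapsto y_x$ into a single real $r$ by deciding, via club-shooting over the scaffolding, which pieces survive; $\varphi_n(r)$ then asserts that the surviving pattern designated by $r$ is a legitimate such code, which can be checked with $n$ real quantifiers (an existential over clubs witnessing non-stationarity on top of a $\Pi^1_{n-1}$-style computation locating the scaffolding). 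The features to extract are \emph{locality} --- running $\mathbb{Q}_{n,r}$ touches only the pieces assigned to $r$, hence does not flip flags set for other reals --- and \emph{robustness} --- since proper forcing preserves stationarity, it neither revives a killed piece nor kills a surviving one, so flags survive proper forcing. It is exactly because activating $\varphi_n$ requires killing stationary sets, which no proper (or semiproper) forcing can do, that the construction leaves the realm of proper forcing, and with it the realm of $\CH$-satisfying $L$-like models, so that $\BPFA$ and the global uniformization pattern are no longer in conflict.

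Next I would assemble the iteration: a suitably supported iteration over $L$ --- revised countable support on the proper part --- driven by a bookkeeping function $F : \kappa \to H_\kappa$ which, cofinally often, (i) enumerates all proper forcings of hereditary size $<\kappa$, so the classical reflecting-cardinal argument of Goldstern--Shelah yields $\BPFA$ in $L^{\mathbb{P}}$, and (ii) enumerates all pairs $(n,a)$ with $n\ge 2$ and $a$ a real of the final model, so every $\Sigma^1_n$ set gets a coding forcing run for it. At a $\BPFA$-stage $\dot{\mathbb{Q}}_\alpha$ is the proper forcing handed down by $F$; at a coding stage $\dot{\mathbb{Q}}_\alpha$ is $\mathbb{Q}_{n,r}$ for a fresh real $r$ coding the chosen uniformization of $(U_n)_a$. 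One then verifies: $\mathbb{P}$ preserves $\omega_1$ and has the $\kappa$-chain condition, so $\omega_1$ and cardinals $\ge\kappa$ are preserved and $\kappa = \omega_2$ in $L^{\mathbb{P}}$; the coding stages do no harm to $\BPFA$, since $\BPFA$ only constrains proper forcings and each $\mathbb{Q}_{n,r}$ is small and $\omega_1$-preserving, so the $\Sigma_1$-statements over $H_{\omega_2}$ that $\BPFA$ must reflect are still caught at later $\BPFA$-stages; and, combining robustness, no-accidental-activation, and the bookkeeping, $\Psi_n$ indeed uniformizes every $\Sigma^1_n$ set of $L^{\mathbb{P}}$ with $\Sigma^1_n$ graph.

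The main obstacle lies in the two preservation facts and the bespoke iteration theorem. There is no off-the-shelf theorem giving $\omega_1$-preservation of $\mathbb{P}$, since the coding forcings are neither proper nor semiproper; one must show that the specific $\mathbb{Q}_{n,r}$ and their interleaving with proper stages admit a fusion/elementary-submodel argument keeping $\omega_1$ alive at all stages and limits. For robustness one must check that \emph{every} later step of $\mathbb{P}$ --- proper or coding --- leaves each previously set flag untouched; the proper case is stationary-set preservation, but the coding case needs the locality analysis to hold uniformly. And the hardest point is ``no accidental activation'': one must prove that in no generic extension along $\mathbb{P}$ does a real satisfy $\varphi_n$ unless the coding forcing produced it, which I expect to require a condensation argument analysing which scaffolding patterns can appear in $\mathbb{P}$-extensions by inspecting sufficiently elementary countable submodels. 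That analysis is the technical heart of the construction; everything else is arranged so as to make it go through.
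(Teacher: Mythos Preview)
Your overall architecture (interleave Goldstern--Shelah stages for $\BPFA$ with coding stages, use $\omega_1$-preserving but non-proper forcings to write information that proper forcing cannot erase) matches the paper, but the uniformization mechanism you describe has a genuine gap.

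You propose, at a coding stage for $(n,a)$, to choose a full uniformizing function $x\mapsto y_x$ for $A=(U_n)_a$ and code it into a single real $r$. This cannot work, for two reasons. First, for $n\ge 4$ the set $A$ is not upward absolute along the iteration: a pair $(x,y_x)$ chosen at stage $\alpha$ may well fall out of $A$ by stage $\kappa$, so the function you flagged need not be contained in the final set at all. Second, even for $n=3$ (where $\Sigma^1_3$ is upward absolute) the domain of $A$ keeps growing as new reals appear, so no single intermediate stage can see the whole function; and since under $\BPFA$ the continuum is $\aleph_2$, the function has $\aleph_2$ many values and cannot be packed into a pattern on a partition of $\omega_1$ into stationary pieces, nor into one real. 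So ``pick a uniformization and flag it'' is not a viable strategy.

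The paper avoids this by never trying to name the uniformizing value directly. The uniformizing formula has the shape
\[
\exists a_0\bigl(\,\varphi'(x,y,a_0)\ \wedge\ \lnot\Theta(x,y,a_0,m)\,\bigr),
\]
where $\varphi'$ is the body of the original $\Sigma^1_n$ formula $\varphi_m$ (so the first conjunct guarantees $(x,y)\in A_m$ automatically, with no absoluteness worries), and $\Theta$ is a $\Pi^1_{n-1}$ statement built from the single $\Sigma^1_3$ predicate ``the tuple $(\ldots)$ is coded into $\vec S$'' by alternating quantifiers. The iteration does not try to make $\varphi'$ true or false; it only controls $\Theta$. At each step one looks at a single section $x$ and a single triple $(x,y^\alpha,a_0^\alpha)$ in the well-ordered list of all triples, and either codes or refrains from coding a certain tuple of reals, subject to constraints accumulated so far. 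The constraints are arranged so that in $L[G_\kappa]$, for the \emph{least} $\alpha$ with $\varphi'(x,y^\alpha,a_0^\alpha)$ true, $\lnot\Theta(x,y^\alpha,a_0^\alpha,m)$ holds, while for every $\beta>\alpha$, $\Theta(x,y^\beta,a_0^\beta,m)$ holds. Uniqueness then comes for free from minimality plus the coding clause, not from having recorded the answer.

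A secondary point: your scaffolding lives on $\omega_1$, whereas the paper's lives on a definable sequence $\vec S=(S_\alpha\mid\alpha\in\mathrm{Lim})$ with $S_\alpha\subset\alpha$ stationary in each regular $L$-cardinal $\alpha$, and each coding act uses a fresh block of $L$-cardinals in $[\eta,\eta^{+\omega})$ (made $\Sigma_1$-definable via Todorcevic's thread-specializing forcing). This gives unbounded address space for the $\kappa$-length iteration, keeps different codes disjoint by construction, and places the club-shooting above $\omega_1$ so that it does not collide with the proper factors working on $H(\omega_2)$. The ``no accidental activation'' you correctly flag as the hard point is handled not by a condensation argument but by the mutual-stationarity preservation lemma: the class of $\vec S$-preserving forcings is closed under countable-support iteration, contains both the proper factors and the club-shooting factors, and one shows that an $\vec S$-coding forcing puts a club through $S_i$ only if it was explicitly told to.
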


The forcing method which yields the $\Sigma^1_n$-uniformization property seems to be quite flexible and we expect several further applications. 

We end the introduction with a short description of how the article is organized. Section 1 introduces the forcings we will use. This builds on a technique first introduced in \cite{Ho5} which in turn relies on the notion of mutual stationarity by Foreman-Magidor and Todorcevic's argument that $\PFA$ implies the failure of square. Section 1.3 sets up a coding machinery which can be combined with the arguments for obtaining $\BPFA$ without dangerous interferences. Section 2 introduces a family of $\Sigma^1_n$-formulas which we will use for the uniformization property. Section 3 explains the coding method which produces universes with the $L$-like uniformization property, and section 4 combines this method with the argument to obtain $\BPFA$ to finally prove the main theorem of this work.

It belongs to a series of articles which devote themselves to the study of the separation, the reduction and the uniformization property (see \cite{Ho2},\cite{Ho4}, \cite{Ho1} and \cite{Ho3}). We emphasize however that our arguments in this work (which deal with the $\Sigma$-side of uniformization) must differ substantially from the arguments which deal with the $\Pi$-side, (as \cite{Ho2},\cite{Ho4} and \cite{Ho3}) as the two pictures necessarily can not be combined within $\ZFC$.

\subsection{Mutually stationary sets}

An algebra $\mathcal{A}$ on a cardinal $\lambda$ is a structure of the form
$(\lambda, f_i)_{i\in \omega}$, where for each $i \in \omega$, $f_i: [\lambda]^{<\omega} \rightarrow \lambda$. Typically, the $f_i$'s will be definable Skolem functions over structures of the form $(H(\theta), \in, <)$ where $<$ is a well-order of $H(\theta)$ and $\theta$ is some regular cardinal. The following notion was introduced by M. Foreman and M. Magidor in their seminal \cite{FM}.

\begin{definition}
Let $K$ be a collection of regular cardinals whose supremum is strictly below $\kappa$, and suppose that $S_\eta\subseteq \eta$ for each $\eta \in K$. Then the collection of sets $\{ S_\eta \mid \eta\in K \}$ is \emph{mutually stationary} if and only if for all  algebras $\mathcal{A}$ on $\kappa$, there is an  $N\prec \mathcal{A}$ such that $$       \text{for all }\eta\in K\cap N,\  \sup(N\cap \eta)\in S_\eta.$$
\end{definition}

Foreman-Magidor (\cite{FM}) show that every sequence $\vec S= (S_{\eta} \mid \eta \in K)$ of stationary sets which concentrate on ordinals of countable cofinality is mutually stationary. For a fixed sequence $\vec S$ of stationary sets, let $\mathcal{T}_{\vec{S}}$ be the collection of all countable $N$ such that for all $\eta_i\in N$,  $\sup(N\cap \eta_i)\in S_i.$

\begin{theorem}[Foreman-Magidor]\label{thm:fm}
    Let $( \eta_i \mid i<j )$ be an increasing sequence of regular cardinals. Let $\vec S= ( S_i \mid    i<j  )$ be a sequence of stationary sets such that $S_i\subseteq \eta_i \cap \Cof(\omega)$. If $\theta$ is a regular cardinal greater than all $\eta_i$ and $\mathcal{A}$ is an algebra on $\theta$, then there is an $N\prec \mathcal{A}$ which  belongs to the class $\mathcal{T}_{\vec{S}}$. Hence $\vec S$ is mutually stationary.
\end{theorem}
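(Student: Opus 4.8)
The plan is to prove the sharper statement that for every regular $\lambda$ with $\mathcal{A},\vec{\eta},\vec{S}\in H(\lambda)$ and every wellorder $<$ of $H(\lambda)$, the structure $\mathfrak{B}=(H(\lambda),\in,<,\mathcal{A},\vec{\eta},\vec{S})$ has a countable elementary submodel $N$ with $\sup(N\cap\eta_i)\in S_i$ for every $i\in N\cap j$. Given such an $N$, the set $N\cap\theta$ is closed under all the functions of $\mathcal{A}$ (as $\mathcal{A}\in N$ and $\omega\subseteq N$), hence $N\cap\theta\prec\mathcal{A}$, and $\sup((N\cap\theta)\cap\eta_i)=\sup(N\cap\eta_i)\in S_i$; since this works for every algebra $\mathcal{A}$ on $\theta$, we obtain that $\mathcal{T}_{\vec S}$ meets every algebra, i.e. $\vec S$ is mutually stationary. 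Throughout, fix definable Skolem functions for $\mathfrak{B}$ and write $\mathrm{Hull}(X)$ for the Skolem hull of $X$ in $\mathfrak{B}$, noting $\mathrm{Hull}(X)=\mathrm{Hull}^{M}(X)$ for every elementary $M\prec\mathfrak{B}$ with $X\subseteq M$.

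The whole argument rests on one observation. For $\eta$ a regular cardinal occurring in $\vec{\eta}$ and $Y\subseteq H(\lambda)$ countable, the map $\gamma\mapsto\sup(\mathrm{Hull}(Y\cup\gamma)\cap\eta)$ is continuous and non-decreasing on $\eta$, so the set $D^{Y}_{\eta}:=\{\gamma<\eta:\mathrm{Hull}(Y\cup\gamma)\cap\eta=\gamma\}$ of its fixed points is club in $\eta$. Hence for stationary $S\subseteq\eta\cap\Cof(\omega)$ one may pick $\gamma\in S\cap D^{Y}_{\eta}$; since $\cf(\gamma)=\omega$, choosing a cofinal $\omega$-sequence $\vec{c}\subseteq\gamma$ and setting $N:=\mathrm{Hull}(Y\cup\vec{c})$ gives $N\cap\eta\subseteq\mathrm{Hull}(Y\cup\gamma)\cap\eta=\gamma$ while $\sup(N\cap\eta)\ge\sup\vec{c}=\gamma$, so $\sup(N\cap\eta)=\gamma\in S$. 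With $Y=\emptyset$ this already settles $j=1$, and it is the building block for everything else.

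I would treat the general finite length by induction on $j$, peeling cardinals off from the top. If $j=m+1$, apply the observation at $\eta_m$ to get $\gamma_m\in S_m\cap D^{\emptyset}_{\eta_m}$ with $\gamma_m>\sup_{i<m}\eta_i$, fix a cofinal $\omega$-sequence $\vec{c}_m\subseteq\gamma_m$, and pass to $\mathfrak{B}':=(\mathrm{Hull}(\gamma_m),\in,<,\mathcal{A},\vec{\eta},\vec{S},\vec{c}_m)$, with $\vec{c}_m$ adjoined as a constant. Then $\mathfrak{B}'\prec\mathfrak{B}$ (in the original language), $\mathrm{Hull}(\gamma_m)$ contains $\eta_0<\dots<\eta_{m-1}$ together with all ordinals below $\eta_{m-1}$, and $\vec{c}_m$ lies in every elementary submodel of $\mathfrak{B}'$. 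Applying the inductive hypothesis for length $m$ inside $\mathfrak{B}'$ yields a countable $N\prec\mathfrak{B}'$ with $\sup(N\cap\eta_i)\in S_i$ for $i<m$; since $N\subseteq\mathrm{Hull}(\gamma_m)$ we get $N\cap\eta_m\subseteq\gamma_m$, while $\vec{c}_m\subseteq N$ forces $\sup(N\cap\eta_m)=\gamma_m\in S_m$, and $N\prec\mathfrak{B}$ as required.

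The hard part, which I expect to be the real obstacle, is a limit length $j$; by countability of $N$ the only essential case is $j=\omega$ (when $\cf(j)>\omega$ a countable $N$ meets $j$ boundedly, reducing to a shorter sequence, and a limit of cofinality $\omega$ relabels to $\omega$). Here there is no top cardinal to peel off, and naively iterating the successor step breaks down: once a level $\eta_k$ has been shrunk to a target in $S_k$, the generators needed to treat a higher level $\eta_{k'}$ sit above $\eta_k$ and can, through Skolem functions, enlarge the trace on $\eta_k$ past that target. Following Foreman--Magidor, one instead produces $N$ as $\mathrm{Hull}(\bigcup_{n}\vec{c}_n)$ for cofinal $\omega$-sequences $\vec{c}_n\subseteq\gamma_n$ with $\gamma_n\in S_n$, choosing the targets by a \emph{simultaneous} recursion in which: each $\gamma_n$ is taken in a club of closure points of the structure generated by the material relevant to the other levels; the $\vec{c}_n$ are placed inside intervals $[\eta_{n-1},\gamma_n)$ so as to avoid the ordinals already committed at lower levels; and the targets at lower levels are permitted to be re-selected inside their $S_k$ as higher levels are processed, so that at any finite stage only finitely many lower levels are "active". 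Arranging this coherent choice over all $\omega$ levels at once — equivalently, building a single countable Skolem hull whose supremum on each $\eta_n$ lands in $S_n$ — is the technical heart of the theorem. Once it is in place the verification that $N$ works is immediate, and by the first paragraph $\vec S$ is mutually stationary.
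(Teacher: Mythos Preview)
The paper does not prove this theorem: it is stated with attribution to Foreman--Magidor and a citation to \cite{FM}, and no argument is supplied. So there is no ``paper's own proof'' to compare your proposal against; the result is simply quoted as background.

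Regarding your proposal on its own merits: the reduction to countable elementary submodels of a fixed $\mathfrak{B}=(H(\lambda),\in,<,\dots)$ and the single-cardinal ``closure point plus cofinal $\omega$-sequence'' observation are correct and are exactly the right starting point. The top-down induction for finite $j$ is also essentially right, though as written your inductive hypothesis is stated only for structures of the form $H(\lambda)$, while at the inductive step you apply it inside $\mathrm{Hull}(\gamma_m)$; you should formulate the hypothesis for arbitrary countable-language structures with definable Skolem functions (or simply note that adding the constants $\vec c_m$ to $\mathfrak{B}$ and running the argument there gives the same effect).

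The genuine gap is the case $j=\omega$. What you have written there is a description of the difficulty together with a loose narrative of how one might hope to overcome it, but it is not a proof: you explicitly label the construction ``the technical heart of the theorem'' and then stop. Moreover, the narrative itself is misleading in one respect. In the Foreman--Magidor construction the target $\gamma_k\in S_k$ is fixed once it is chosen; one does \emph{not} ``re-select'' targets at lower levels as higher levels are processed. The actual mechanism is rather that one interleaves two tasks over an $\omega$-recursion: (a) when level $k$ is first visited, pick $\gamma_k\in S_k$ to be a closure point (at $\eta_k$) of the hull of the finitely many generators chosen so far; (b) at later visits to level $k$, add one further element of the cofinal $\omega$-sequence below $\gamma_k$, chosen so that the invariant ``for every already-committed level $i$, the hull of the current generators meets $\eta_i$ below $\gamma_i$'' is preserved. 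Verifying that such a choice is always available --- in particular, that adding a generator at a higher level does not push the hull at a lower level past its committed target --- is precisely the work you have omitted, and it is where the regularity of the $\eta_i$ and the cofinality-$\omega$ hypothesis are used in an essential way. Until that step is written out, the proposal is an outline rather than a proof.
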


From now on, we assume all stationary subsets of ordinals discussed in this section are concentrated on countable cofinality. The corresponding notion for being club in this context is that of an unbounded set which is closed under $\omega$-sequences.

\begin{definition}
    Suppose $\vec S=\{ S_\eta\mid \eta\in K\}$ is mutually stationary and that for every $\eta \in K$, $S_{\eta}$ is stationary, co-stationary in $\eta$ $ \cap $ Cof $ ({\omega})$. We say a forcing poset $\mbp$ is $\vec S$-preserving if the following holds:
    Suppose $\theta> 2^{ \vert \mbp^+\vert}$ is regular. Suppose $M$ is a countable elementary submodel of $H(\theta)$ with $\{\mbp,\vec S\}\subset M$ and $M\in \mathcal{T}_{\vec{S}}$. Suppose $p\in \mbp\cap M$. Then there exists a $(M,\mbp )$-generic condition $q$ extending $p$.
\end{definition}
We add some remarks concerning the notions:
    \begin{enumerate}
        \item Any proper forcing is $\vec S$-preserving.
        \item When $K=\{ \omega_1\}$ and $\vec{S}=S \subset \omega_1$, the definition of $\vec S$-preserving is identical to the usual definition of $S$-proper forcing.
        \item Let $\vec{S}$ be such that each $S_{\eta} \in \vec S$ is stationary, co-stationary in $\eta \cap Cof(\omega)$.
       Then an example of a non-proper, $\vec S$ preserving forcing is the forcing poset $Club(S_\eta)$ for a fixed $\eta$, i.e, the forcing which adds an unbounded subset to $S_\eta$ which is closed under $\omega$-sequences, via countable approximations.
    \end{enumerate}

The preservation theorems for countable support iterations of proper forcings can be generalized to $\vec S$-preserving forcings.

\begin{lemma}\label{lem: preservation}
    If $\langle P_i, \dot{Q}_i \mid  i< \alpha \rangle$ is a countable support iteration of  forcing notions and for each $i<\alpha$, $\Vdash_{P_i}$``$\dot{Q}_i$ is $\vec S$-preserving'' then $P_\alpha$ is $\vec S$-preserving.
\end{lemma}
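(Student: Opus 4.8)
\emph{Proof plan.} The plan is to argue by induction on the length $\alpha$, following Shelah's classical proof that countable support iterations of proper forcings are proper, and to check that the single place where properness is invoked can be replaced by $\vec S$-preservation. The one point that genuinely uses the hypothesis on $\vec S$ is a stability property of $\mathcal{T}_{\vec{S}}$ under generic extensions, which I would establish first: if $\mathbb Q$ is any poset, $\theta$ is suitably large, $M \prec H(\theta)$ is countable with $M \in \mathcal{T}_{\vec{S}}$ and $\mathbb Q, \vec S \in M$, and $q$ is an $(M,\mathbb Q)$-generic condition, then $M[G] \in \mathcal{T}_{\vec{S}}^{V[G]}$ for every $V$-generic $G \ni q$. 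Indeed $M[G] \prec H(\theta)^{V[G]}$ by the standard argument, and $M[G]\cap\Ord = M\cap\Ord$: an ordinal of $M[G]$ is $\dot\delta^G$ for a name $\dot\delta\in M$, the set of conditions deciding $\dot\delta$ is dense and lies in $M$, so by $(M,\mathbb Q)$-genericity it is met inside $M\cap G$, forcing $\dot\delta$ equal to an ordinal of $M$. Since the index set $K$ and the sets $S_\eta$ are untouched and $K\cap M[G]=K\cap M$ by the same reasoning, $\sup(M[G]\cap\eta)=\sup(M\cap\eta)\in S_\eta$ for all $\eta\in K\cap M[G]$, as required.

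Granting this, the successor step is routine: write $P_{\alpha+1}=P_\alpha*\dot Q_\alpha$, where $\Vdash_{P_\alpha}$ ``$\dot Q_\alpha$ is $\vec S$-preserving'' by hypothesis. Given $M\in\mathcal{T}_{\vec{S}}$ with $P_{\alpha+1},\vec S\in M$ and $p\in P_{\alpha+1}\cap M$, the induction hypothesis yields an $(M,P_\alpha)$-generic $q_\alpha\le p\upharpoonright\alpha$; passing to $V[G_\alpha]$ for $G_\alpha\ni q_\alpha$, the stability property gives $M[G_\alpha]\in\mathcal{T}_{\vec{S}}^{V[G_\alpha]}$, and as $\dot Q_\alpha^{G_\alpha}$ is $\vec S$-preserving there I can choose an $(M[G_\alpha],\dot Q_\alpha^{G_\alpha})$-generic condition below $p(\alpha)^{G_\alpha}$; a $P_\alpha$-name for it, amalgamated with $q_\alpha$, gives the desired $(M,P_{\alpha+1})$-generic extension of $p$ by the standard two-step lemma on $(M,P*\dot Q)$-genericity. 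For a limit $\alpha$ of uncountable cofinality the statement is nearly free: since $P_\alpha\in M$ and $M$ is countable, any $p\in P_\alpha\cap M$ has support bounded by some $\beta\in M\cap\alpha$, so $p\in P_\beta\cap M$, and an $(M,P_\beta)$-generic $q\le p$ from the induction hypothesis is automatically $(M,P_\alpha)$-generic.

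The heart of the matter is a limit $\alpha$ with $\cf(\alpha)=\omega$, and here I would run the classical fusion argument for countable support iterations essentially verbatim. Fixing an increasing sequence $\langle\alpha_n\mid n<\omega\rangle$ cofinal in $\alpha$ with $\alpha_0=0$ and all $\alpha_n\in M$, and an enumeration $\langle D_n\mid n<\omega\rangle$ of the dense open subsets of $P_\alpha$ in $M$, one recursively builds $(M,P_{\alpha_n})$-generic conditions $q_n$ with $q_{n+1}\upharpoonright\alpha_n=q_n$, arranging (via the usual auxiliary conditions recording that $D_n$ has been entered) that the final amalgamated condition will lie below some member of $D_n\cap M$. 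The only change to the classical step is that, in passing to $V[G_{\alpha_n}]$ to extend $q_n$ along the tail $P_{[\alpha_n,\alpha_{n+1})}$ — itself, in $V[G_{\alpha_n}]$, a countable support iteration of $\vec S$-preserving forcings of length $<\alpha$ — one uses $M[G_{\alpha_n}]\in\mathcal{T}_{\vec{S}}^{V[G_{\alpha_n}]}$ (the stability property) so that the induction hypothesis applies and produces the required $(M[G_{\alpha_n}],P_{[\alpha_n,\alpha_{n+1})})$-generic condition, which is then amalgamated with $q_n$ by the two-step lemma. The limit condition $q$, with $\supp(q)=\bigcup_n\supp(q_n)\subseteq M\cap\alpha$ (a countable set, hence $q\in P_\alpha$) and $q\upharpoonright\alpha_n=q_n$, is then $(M,P_\alpha)$-generic and extends $p$.

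I expect the main difficulty to be organizational rather than conceptual. On the one hand, the $\cf(\alpha)=\omega$ fusion must be bookkept carefully enough — as already in the proper case, typically by carrying the induction hypothesis in the stronger form ``every $(M,P_\beta)$-generic condition forcing $p\upharpoonright\beta$ into the generic extends to an $(M,P_\alpha)$-generic condition forcing $p$ into the generic'' — so that the amalgamated limit condition genuinely meets every dense set of $M$. On the other hand, one must verify at each appeal to the induction hypothesis and to $\vec S$-preservation that the countable model currently in play, always $M$ or one of the $M[G_{\alpha_n}]$, still belongs to $\mathcal{T}_{\vec{S}}$; this is exactly the content of the stability property, and rests on the simple point that forcing with an initial segment of the iteration adds no ordinals to $M$ and hence cannot displace $\sup(M\cap\eta)$ from $S_\eta$.
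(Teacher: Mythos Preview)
Your proposal is correct and follows essentially the same approach as the paper: both defer to the classical proof for countable support iterations of proper forcings (the paper points to Jech's Theorem~31.15 and Lemma~31.17), carry the strengthened induction hypothesis---your ``stronger form'' is exactly the paper's $(\ast)_N$---and observe that replacing properness by $\vec S$-preservation goes through unchanged. Your explicit stability property $M[G]\in\mathcal{T}_{\vec S}^{V[G]}$ is the one ingredient the paper leaves implicit in its one-line sketch.
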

\begin{proof}(Sketch, following the proof of \cite{Jech}, Theorem 31.15, in particular Lemma 31.17)
    We will only need to show by induction on $j\le \alpha$ that for any $N\in \mathcal T_{\vec{S}}$, if $j, \langle P_i, \dot{Q}_i \mid  i< \alpha \rangle\in N$, then:
\begin{itemize}
\item[$(\ast)_N$]  For every $q_0\in N\cap P_j$ that is $(N,P_j)$-generic and every $\dot{p} \in V^{P_j}$ such that
\[q_0 \Vdash_j \dot{p} \in  (P_{\alpha} \cap N) \land \dot{p} \upharpoonright j \in \dot{G}_j \]

there is an $(N,P_{\alpha})$ generic condition $q \in P_{\alpha}$  extending $q_0$, that is $q \upharpoonright j = q_0$ and  $q \Vdash_{\alpha} \dot{p} \in \dot{G}_{\alpha}$.
\end{itemize}

    The statement $(\ast)_N$ is identical to Lemma 31.17 in \cite{Jech}. It can be checked that the original proof also works here, which gives the iteration theorem exactly as in the proof of Theorem 31.15.
\end{proof}

We will use club shooting forcings relative to definable sequences $\vec S$ of mutually stationary sets to code information. These codings do not interfere with the proper forcings we will use to work towards $\BPFA$.
\begin{definition}
    Let $\kappa$ be an inaccessible cardinal. Let $\vec S= ( S_{i} \mid i< \kappa )$ be mutually stationary.  We say a forcing poset $\mbp$ is an $ \vec S$-coding if $\delta \le \kappa$ and $\mbp= \langle \forceP_\alpha, \dot{\forceQ}_\alpha \mid \alpha<\delta \rangle $ satisfies the following:
    \begin{itemize}
        \item $\mbp$ is a countable support iteration.
        \item For any $\alpha<\delta$, one of the followings holds:
        \begin{enumerate}

            \item Assume that $\alpha$ is inaccessible and $\mbp_\alpha$ is forcing equivalent to a forcing of size less than or equal to $\alpha$.\footnote{We say two forcing $P$ and $Q$ are equivalent if their Boolean completions $B(P)$ and $B(Q)$ are isomorphic.} Assume that in $V^{\mbp_\alpha}$, $( B_\beta \subset \alpha \mid \beta <2^\alpha )$ is an enumeration of an arbitrary set $X \subset P( {\alpha} )$. Then $\dot{\forceQ}_\alpha$ is allowed to be the countably supported product $\prod_{\beta < 2^{\alpha}} \dot{\forceR}_{\beta}$, where each $\dot{\forceR}_{\beta}$ is itself defined to be $$\prod_{j\in B_\beta}Club(S_{\alpha\cdot(\beta+1)+2j}) \times \prod_{j\notin B_\beta}Club(S_{\alpha\cdot(\beta+1)+2j+1})$$ using countable support.

             \item In all other cases, we have that $\Vdash_{\mbp_\alpha}\dot{\forceQ}_\alpha$ is proper.
        \end{enumerate}
    \end{itemize}
    Let $\eta$ be an regular cardinal, we  say $\mbp$ is an $\eta$-$\vec S$ coding if (1) is replaced by
    \begin{itemize}
        \item[(1')]  $\alpha \ge \eta$ is inaccessible and $\mbp_\alpha$ is forcing equivalent to a forcing of size less than or equal to $\alpha$. In $V^{\mbp_\alpha}$, $\langle B_\beta\mid \beta <2^\alpha\rangle$ is an enumeration of $P(\alpha)$. Then $\dot{\forceQ}_{\alpha}$ is allowed  to be the countably supported product $\prod_{\beta< 2^{\alpha}} \dot{\forceR}_{\beta}$ where each $\dot{\forceR}_{\beta}$ is itself $$\prod_{j\in B_\beta}Club(S_{\alpha\cdot(\beta+1)+2j}) \times \prod_{j\notin B_\beta}Club(S_{\alpha\cdot(\beta+1)+2j+1}). $$
    \end{itemize}
\end{definition}
By Lemma \ref{lem: preservation}, once we can show that every factor of an $\vec{S}$-coding is $\vec{S}$-preserving, we can deduce that  if $\mbp$ is a $\vec S$-coding forcing, then $\mbp$ is $\vec S$-preserving. This assertion follows from the proof of the next lemma which says that we will not accidentally code unwanted information whenever we use a $\vec{S}$-coding forcing.

\begin{lemma}\label{lem: stationary preservation}
    Suppose that $\vec S$ is stationary, co-stationary. Suppose $\mbp$ is an $\vec S$-coding forcing. Then for any $i \in \kappa$, the followings are equivalent:
    \begin{itemize}
        \item[(a)] $\Vdash_{\mbp} S_{i}$ contains an $\omega$-club.
        \item[(b)] there are $\beta,\alpha,j$ and sets $B_{\beta} \in X  \subset V_{\alpha}$ such that $j \in B_{\beta}$ if $\beta\cdot(\alpha+1)+2j=i$ and $i$ is even and $j \notin B_{\beta}$ if $\beta\cdot(\alpha+1)+2j+1=i$ and  $i$ is odd.
    \end{itemize}
\end{lemma}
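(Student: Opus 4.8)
The plan is to treat the two implications separately: $(b)\Rightarrow(a)$ is essentially bookkeeping, while $(a)\Rightarrow(b)$ needs a genuine argument run through the contrapositive together with a ``swap to the complement'' trick. Throughout, $\eta_i$ denotes the regular cardinal with $S_i\subseteq\eta_i\cap\Cof(\omega)$. For $(b)\Rightarrow(a)$: if $(b)$ holds then, by the very definition of an $\vec S$-coding, $Club(S_i)$ occurs as one of the coordinates of the product $\dot{\forceR}_\beta$ inside $\dot{\forceQ}_\alpha$ at the relevant inaccessible coding stage $\alpha<\delta$, so its generic object is an unbounded subset of $S_i$ closed under $\omega$-sequences; hence $S_i$ contains an $\omega$-club $C$ already in $V^{\mbp_{\alpha+1}}$. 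The tail $\mbp_{[\alpha+1,\delta)}$ is again an $\vec S$-coding, hence (by Lemma~\ref{lem: preservation} together with the factor-wise preservation established in the last paragraph) $\vec S$-preserving, so it preserves $\omega_1$; one then checks in the standard way that $C$ stays an $\omega$-club below $S_i$ in $V^{\mbp}$, since $C$ remains unbounded in $\eta_i$ and an ordinal of cofinality $\ge\omega_1$ cannot acquire a cofinal $\omega$-sequence over an $\omega_1$-preserving forcing, so no new $\omega$-limit point of $C$ can land outside $C$.

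For $(a)\Rightarrow(b)$ I would argue the contrapositive. Assume $(b)$ fails. Comparing with the definition of an $\vec S$-coding — and using that the coding coordinates at distinct stages, and for distinct $\beta$ within one stage, lie in pairwise disjoint blocks of ordinals, so each $i$ has at most one candidate triple $(\alpha,\beta,j)$ — the failure of $(b)$ says exactly that $Club(S_i)$ is \emph{never} a factor of any $\dot{\forceQ}_\alpha$ of $\mbp$. Put $T_i:=(\eta_i\cap\Cof(\omega))\setminus S_i$; since $\vec S$ is stationary, co-stationary, $T_i$ is stationary and concentrated on countable cofinality. Let $\vec{S'}$ be $\vec S$ with the single entry $S_i$ replaced by $T_i$. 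Then $\vec{S'}$ is again mutually stationary by Theorem~\ref{thm:fm}, it is stationary, co-stationary, and — this is the point — because every club-shooting factor of $\mbp$ is $Club(S_\ell)$ with $\ell\ne i$ while $S'_\ell=S_\ell$ for all $\ell\ne i$, the identical iteration $\mbp$ is also an $\vec{S'}$-coding; in particular $\mbp$ is $\vec{S'}$-preserving.

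Now suppose towards a contradiction that $p\in\mbp$ forces $\dot C$ to be an $\omega$-club contained in $S_i$. Fix a regular $\theta>2^{\vert\mbp^{+}\vert}$ and, using the mutual stationarity of $\vec{S'}$ (Theorem~\ref{thm:fm}), a countable $M\prec(H(\theta),\in)$ with $\{\mbp,\vec{S'},\dot C,p,\eta_i\}\subseteq M$ and $M\in\mathcal T_{\vec{S'}}$, so that $\gamma:=\sup(M\cap\eta_i)\in S'_i=T_i$ and $\gamma$ has cofinality $\omega$. By $\vec{S'}$-preservation there is an $(M,\mbp)$-generic $q\le p$. Working below $q$ in $V^{\mbp}$ we have $M[\dot G]\cap\eta_i=M\cap\eta_i$, hence $\sup(M[\dot G]\cap\eta_i)=\gamma$; as $\dot C^{\dot G}$ is unbounded in $\eta_i$, elementarity of $M[\dot G]$ yields that $M[\dot G]\cap\dot C^{\dot G}$ is cofinal in $\gamma$, and since $\dot C^{\dot G}$ is closed under $\omega$-sequences this gives $\gamma\in\dot C^{\dot G}$. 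But $\gamma\in T_i$, contradicting $q\le p\Vdash\dot C\subseteq S_i$. Therefore $\Vdash_\mbp$ ``$S_i$ contains no $\omega$-club'', which is $\neg(a)$.

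It remains to supply the ingredient used twice above: every factor of an $\vec S$-coding is $\vec S$-preserving (whence, by Lemma~\ref{lem: preservation}, so is the coding itself). The only case needing work is $\dot{\forceQ}_\alpha=\prod_{\beta<2^\alpha}\dot{\forceR}_\beta$ at a coding stage, which is a countably supported product of forcings $Club(S_\ell)$. Given $M\in\mathcal T_{\vec S}$ with $\dot{\forceQ}_\alpha,\vec S\in M$ and $p\in\dot{\forceQ}_\alpha\cap M$, one builds an $(M,\dot{\forceQ}_\alpha)$-generic $q\le p$ by a fusion following Foreman--Magidor: enumerate the dense subsets of $\dot{\forceQ}_\alpha$ in $M$ as $(D_n)_{n<\omega}$, construct inside $M$ a decreasing chain $p=p_0\ge p_1\ge\cdots$ with $p_{n+1}\in D_n$ and supports increasing to a countable $\sigma\subseteq M$ exhausting all coordinates touched, and then set, for each $\ell\in\sigma$, the $\ell$-th coordinate of $q$ to $\bigl(\bigcup_n p_n(\ell)\bigr)\cup\{\sup(M\cap\eta_\ell)\}$. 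Each such coordinate is a legitimate condition because the one new limit point it introduces, $\sup(M\cap\eta_\ell)$, lies in $S_\ell$ precisely because $M\in\mathcal T_{\vec S}$; and $q$ meets every $D_n$. The same computation is valid with $\vec S$ replaced by $\vec{S'}$. I expect \emph{this} to be the main obstacle: a single countable $M$ must simultaneously satisfy $\sup(M\cap\eta_\ell)\in S_\ell$ at \emph{all} of its active coordinates at once, which is exactly what the Foreman--Magidor mutual stationarity of the sequence delivers — and it is this symmetry between a stationary set and its complement, to which the iteration $\mbp$ is blind once $(b)$ fails, that makes the passage to $\vec{S'}$ legitimate and hence makes the contrapositive go through.
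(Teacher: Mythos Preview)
Your proof is correct and follows essentially the same approach as the paper's own argument: the swap $S_i \mapsto T_i=(\eta_i\cap\Cof(\omega))\setminus S_i$ to form a modified sequence, the observation that $\mbp$ is preserving for the modified sequence because $Club(S_i)$ never occurs as a factor when $(b)$ fails, and the contradiction via an $(M,\mbp)$-generic condition forcing $\sup(M\cap\eta_i)\in\dot C\cap T_i$. Your treatment of $(b)\Rightarrow(a)$ is more explicit than the paper's (which simply says it follows from the definition), and you phrase the key step as ``$\mbp$ is an $\vec{S'}$-coding'' whereas the paper argues $\vec T$-preservation of each factor directly, but these are cosmetic differences only.
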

\begin{proof} ((b) $\rightarrow$ (a)) Follow from the definition of the forcing.

    ((a) $\rightarrow$ (b)) Fix an $i$ and assume without loss of generality that $i$ is even. Write $i= \beta \cdot ({\alpha+1}) +2j$ and suppose for a contradiction that  $j$ is not an element of $B_{\beta}$. By the definition of  $\vec S$ coding forcing, we must have added a club through $S_{\beta (\alpha+1)+2j+1}$ instead.  Let $\vec T$ be the sequence $\langle T_k \mid k<\kappa \rangle$, where $T_k=S_{k}$ if $k\neq i$ and $T_i=\eta_i\setminus S_{i}$. It follows from Theorem \ref{thm:fm} again that $\vec T$ is mutually stationary. We will prove that $\mbp$ is $\vec T$-preserving to derive a contradiction. Indeed, we shall see that $\vec T$-preservation implies that $\eta_i \setminus S_i$ must remain stationary after forcing with $\forceP$, yet $\forceP \Vdash `` S_i$ contains an $\omega$-club$"$ which is impossible.

    To see that $\vec{T}$ preserving forcings preserve the stationarity of every $S_{\eta_i} \in \vec{T}$, we only need to note that for any name $\dot C$ of a subset of $\eta_i$ which is unbounded and $\omega$-closed, and any countable elementary substructure $N$ which contains $\dot C$  and for  which $\sup (N\cap \eta_i)\in S_{\eta_i}$, any $(N,\mbp)$-generic condition $q$ forces $\dot C\cap ( S_{\eta_i})\neq \emptyset$.

  Next we show by induction that each $\dot{Q}_{\beta}$ is forced to be $\vec T$-preserving. Work in $V[G_\beta]$. If $\dot Q_{\beta}/G_\beta$ is proper, then it is also $\vec T$-preserving. Otherwise, (1) holds. Now $\dot{Q}_\beta/G_{\beta}$ is a countable support product of club adding forcings. Fix any $N\in \mathcal T_{\vec{T}}$ which is a countable substructure of $H(\theta)^{V[G_\beta]}$. For any $p\in N\cap \dot Q_{\beta}$, we can construct a countable decreasing sequence of conditions $\langle p_n \mid n<\omega \rangle$ meeting all dense set in $N$. Define $q$ coordinatewise by setting $q(j)$ to be the closure of $\bigcup_{n<\omega} p_n(j)$ if $i\in N$ and trivial otherwise. Note that any non-trivial $q(j)$ is equal to $\bigcup_{n<\omega} p_n(j)\cup \{\sup(N\cap \eta_j)\}$, where $\eta_j=\sup (S_j)$ is a regular cardinal. As $N\in \mathcal T_{\vec{T}}$ we have $\sup(N\cap \eta_{j})\in S_j$, whenever $q(j)$ is non-trivial. Hence $q<p$ is a condition witnessing that each factor of the iteration  is $\vec T$-preserving, so the iteration  $\forceP$ is $\vec T$-preserving as well. But now $S_i$ must remain stationary after we forced with $\forceP$, which is a contradiction to $(a)$.
\end{proof}

The proof also shows that $\vec S$-coding preserves stationary subset of $\omega_1$ if $ \sup(S_0)>\omega_1$. As a Corollary of Lemma \ref{lem: stationary preservation} and the definition of $\vec S$-coding, in any generic extension by $\vec S$-coding and any even $i$, at most one of $S_{i}$ and $S_{i+1}$ contains a club.

The next lemma follows immediately from the definitions so we skip its proof.
\begin{lemma}\label{lem: composition}
    Suppose $\mbp= \langle P_\alpha, \dot{Q}_\alpha \mid \alpha<\delta \rangle $ is a countable support iteration. Suppose for any $\alpha>0$, $\dot{Q}_\alpha$ is forced to be $\eta_\alpha$-$\vec S$ coding of length $l(\alpha)$, where $\eta_ \alpha=\max \{\vert \mbp_\alpha\vert^+, \Sigma_{\beta<\alpha}l(\beta) \}$. Also let $\eta_0$ be regular. Then $\mbp$ is forcing equivalent to an $\eta_0$-$\vec S$ coding.
\end{lemma}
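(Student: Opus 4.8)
The plan is to derive this from the standard \emph{associativity} of countable support iterations: an iteration each of whose iterands is itself (forced to be) a countable support iteration is, up to forcing equivalence, a single countable support iteration whose length is the ordinal sum of the constituent lengths and whose iterands are the constituent iterands concatenated.

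First I would set $L(\alpha)=\sum_{\beta<\alpha}l(\beta)$ and prove by induction on $\alpha\le\delta$ that $\mbp_\alpha$ is forcing equivalent to a countable support iteration of length $L(\alpha)$: the successor step is the two step associativity for $\mbp_\alpha\ast\dot Q_\alpha$ (an iteration of length $L(\alpha)$ followed by one of length $l(\alpha)$ is an iteration of length $L(\alpha)+l(\alpha)=L(\alpha+1)$), and the limit step uses that $L$ is continuous and that countable support limits commute appropriately. This produces a forcing equivalence $\mbp\cong\forceR=\langle\forceR_\xi,\dot{\forceS}_\xi\mid\xi<L(\delta)\rangle$ with the explicit description: for $\xi<L(\delta)$, letting $\alpha$ be the unique ordinal with $L(\alpha)\le\xi<L(\alpha+1)$ and $\gamma=\xi-L(\alpha)<l(\alpha)$, the iteration $\forceR_\xi$ is equivalent to $\mbp_\alpha\ast(\dot Q_\alpha\upharpoonright\gamma)$ and $\dot{\forceS}_\xi$ is the $\gamma$-th iterand of the $\eta_\alpha$-$\vec S$ coding $\dot Q_\alpha$.

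Next I would verify the clauses defining an $\eta_0$-$\vec S$ coding for $\forceR$, by cases according to the corresponding clause for $\dot Q_\alpha$ at its local stage $\gamma$. If the $\gamma$-th iterand of $\dot Q_\alpha$ is forced to be proper, then so is $\dot{\forceS}_\xi$ and clause (2) holds at $\xi$. Otherwise clause (1') holds for $\dot Q_\alpha$ at $\gamma$, so $\gamma\ge\eta_\alpha$ is inaccessible, $\dot Q_\alpha\upharpoonright\gamma$ is forced to be equivalent to a forcing of size $\le\gamma$, and $\dot{\forceS}_\xi$ is the prescribed countably supported product of club shooting forcings indexed by an enumeration of $P(\gamma)$ in $V^{\forceR_\xi}$. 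The crux is the index arithmetic: from $\eta_\alpha=\max\{|\mbp_\alpha|^+,L(\alpha)\}$ we get $|\mbp_\alpha|<\gamma$ and $L(\alpha)\le\gamma$, and since $\gamma$ is an infinite cardinal it absorbs the smaller ordinal $L(\alpha)$ on the left, whence $\xi=L(\alpha)+\gamma=\gamma$. Thus $\xi$ is inaccessible with $\xi\ge\eta_\alpha$, the product at $\xi$ is indexed by an enumeration of $P(\xi)=P(\gamma)$, and $\forceR_\xi\cong\mbp_\alpha\ast(\dot Q_\alpha\upharpoonright\gamma)$ is a two step iteration of a poset of size $<\gamma$ followed by one equivalent to a poset of size $\le\gamma$; by inaccessibility of $\gamma$ (which bounds by $\gamma$ the number of nice $\mbp_\alpha$-names for elements of a poset of size $\le\gamma$) this iteration is equivalent to a poset of size $\le\gamma=\xi$. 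Hence $\forceR$ satisfies clause (1') at $\xi$, which finishes the verification.

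I expect the main obstacle to be the first step, namely making the associativity/flattening of the \emph{countable support} iteration precise; at limit stages one must check that the iteration of iterations is genuinely (forcing equivalent to) the concatenated iteration, which is standard (cf.\ the treatment of iterated forcing in \cite{Jech}) but is the only non-formal ingredient. A secondary point I would make explicit is that $\eta_\alpha\ge\eta_0$ for every $\alpha$, so that all club shooting stages of $\forceR$ lie above $\eta_0$; this holds because $\dot Q_0$ is an $\eta_0$-$\vec S$ coding of length $l(0)\ge\eta_0$, so $L(\alpha)\ge\eta_0$ and hence $\eta_\alpha\ge\eta_0$ for all $\alpha\ge1$, while $\xi\ge\eta_0$ is immediate when $\alpha=0$ (equivalently, one may simply include $\eta_0$ in the maximum defining $\eta_\alpha$). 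With the flattening in hand and the identity $\xi=L(\alpha)+\gamma=\gamma$ observed, everything else is a direct unwinding of the definitions, as the statement claims.
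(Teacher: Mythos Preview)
Your proposal is correct and is exactly the natural unpacking of what the paper has in mind: the paper omits the proof entirely, stating only that the lemma ``follows immediately from the definitions,'' and your flattening-plus-verification argument is the expected way to make this precise. The key observation you isolate---that at a coding stage $\gamma\ge\eta_\alpha\ge L(\alpha)$ one has $\xi=L(\alpha)+\gamma=\gamma$ by cardinal absorption, so the global and local indices coincide---is indeed the only point with any content, and you handle it correctly.

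One minor quibble: your claim that $l(0)\ge\eta_0$ need not hold in general (an $\eta_0$-$\vec S$ coding may have length $<\eta_0$ if all its factors happen to be proper), so the inequality $\eta_\alpha\ge\eta_0$ is not automatic from the hypotheses as literally stated. Your parenthetical fix---folding $\eta_0$ into the maximum defining $\eta_\alpha$---is the right way to read the lemma, and in the paper's intended applications the issue does not arise. This is a wrinkle in the statement rather than in your argument.
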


\subsection{Specializing threads through $\square$-sequences}

We will briefly present a celebrated theorem, due to S. Todorcevic, which we use to
find $\Sigma_1$-definitions of regular $L$- cardinals. The argument is again entirely due to Todorcevic (see \cite{To2}).

\begin{theorem}[Todorcevic, see \cite{To1}]
Let $\kappa > \omega_1$ be a regular cardinal and let $\Gamma \subset \kappa$ be a set of limit ordinals such that $\{ \delta < \kappa \mid \text{cf}  \, (\delta) = \omega_1 \} \subset \Gamma.$ Let $(C_{\alpha} \mid \alpha \in \Gamma \}$ be a sequence of  subsets of $\kappa$ such that
\begin{enumerate}
\item $\forall \alpha \in \Gamma$ $C_{\alpha} \subset \alpha$ and $C_{\alpha}$ is a club,
\item if $\beta$ is a limit point of $C_{\alpha}$, then $\beta \in \Gamma$ and
$C_{\beta} = C_{\alpha} \cap \beta$.
\item There is no club $C \subset \kappa$ such that if $\alpha$ is a limit point of
of $C$, then $\alpha \in \Gamma$ and $C_{\alpha} = C \cap \alpha$.
\end{enumerate}

Then there is a proper forcing $\mathbb{T}^{\kappa}$ such that $\mathbb{T}^{\kappa}$ forces that in the generic extension, there is a closed subset $D$ of $\kappa$ of ordertype $\omega_1$ and a function $f: D \rightarrow \omega$ such that for all $\alpha, \beta$, if $\alpha, \beta \in D$ and $\alpha \in \operatorname{lim} C_{\beta}$, then
$f(\alpha) \ne f(\beta)$. 

\end{theorem}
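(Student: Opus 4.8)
The plan is to let $\mathbb{T}^\kappa$ be the poset of countable, closed approximations to the desired pair $(D,f)$, and to prove it is proper by a reflection argument over countable elementary submodels, using the coherence of $\vec C=(C_\alpha\mid\alpha\in\Gamma)$ to transfer conditions and clause (3) to guarantee that the needed conditions exist. A condition of $\mathbb{T}^\kappa$ is a pair $p=(d_p,f_p)$ where $d_p\in[\kappa]^{\le\omega}$ is closed in the order topology, $f_p\colon d_p\to\omega$, and $f_p(\alpha)\ne f_p(\beta)$ whenever $\alpha,\beta\in d_p$ with $\alpha\in\operatorname{lim}C_\beta$; the ordering is $q\le p$ iff $d_p$ is an initial segment of $d_q$ and $f_q\upharpoonright d_p=f_p$. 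As is customary for such ``closed specialising'' posets one in fact enriches a condition by finite side data (``reservation'' functions) whose only purpose is to keep the specialising requirement satisfiable whenever a new limit point is adjoined; I suppress this bookkeeping below, noting only that it is what makes every limit step of the two arguments that follow go through. If $G$ is $\mathbb{T}^\kappa$-generic, put $D=\bigcup_{p\in G}d_p$ and $f=\bigcup_{p\in G}f_p$.

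That $f$ is a function from $D$ to $\omega$ with $f(\alpha)\ne f(\beta)$ for $\alpha\in\operatorname{lim}C_\beta$ is immediate, since any two points of $D$ lie in a common $d_p$, and $D$ is closed by a density argument adjoining limit points. The sets $d_p$ $(p\in G)$ form a chain of initial segments of $D$, each of countable order type, so $\ot(D)=\sup_{p\in G}\ot(d_p)\le\omega_1$; hence the only substantive point is $\ot(D)\ge\omega_1$, i.e.\ that $\{q:\ot(d_q)\ge\xi\}$ is dense for every $\xi<\omega_1$. Adjoining a single ordinal above $\max d_p$ is free (take a successor ordinal, which lies outside $\Gamma$ and so carries no constraint), so the content is at limit steps: given a descending $\omega$-chain of conditions with union-set $d$, one must choose the new limit point $\lambda$ and a colour $k$ (consistent with the reservations) so that $(d\cup\{\lambda\},f\cup\{(\lambda,k)\})$ is again a condition. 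Coherence forces $f$ to be \emph{injective} on every set $\operatorname{lim}C_\beta\cap d$ --- if $\alpha<\alpha'$ both lie in $\operatorname{lim}C_\beta$ then $C_{\alpha'}=C_\beta\cap\alpha'$, hence $\alpha\in\operatorname{lim}C_{\alpha'}$ --- so the problem reduces to choosing $\lambda$ with $\operatorname{lim}C_\lambda\cap d$ finite, equivalently with a colour left free. This is exactly where (3) enters: if along every admissible tower of candidate limit points these intersections were forced to be large, the limit points would assemble into a club $C\subseteq\kappa$ with $C_\alpha=C\cap\alpha$ at every limit point $\alpha$ of $C$, contradicting (3).

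For properness, fix a large regular $\theta$, a countable $M\prec(H(\theta),\in,<)$ with $\vec C,\kappa,\mathbb{T}^\kappa\in M$, and $p\in\mathbb{T}^\kappa\cap M$; one must build an $(M,\mathbb{T}^\kappa)$-generic $q\le p$. Let $\delta_M=\sup(M\cap\kappa)$. The argument of the previous paragraph, now carried out inside $M$, shows that $\operatorname{lim}C_{\delta_M}$ meets $M$ in a set bounded in $\delta_M$, whose intersection with $d_p$ can moreover be made finite after first extending $p$ within $M$ if needed, so that $q_0:=(d_p\cup\{\delta_M\},f_p\cup\{(\delta_M,k)\})$ --- with $\delta_M$ placed immediately above $\max d_p$ and $k$ respecting the reservations --- is a condition below $p$. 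One then runs the standard construction: enumerate the dense subsets of $\mathbb{T}^\kappa$ lying in $M$ and, at successive steps, meet one of them \emph{inside} $M$ --- possible because $p\in M$ and the relevant countable data of the working condition lies below $\delta_M$ and so is reflectable into $M$ by elementarity and the coherence of $\vec C$ --- taking unions at limits and adjoining new limit points with reservation-consistent colours, arriving at $q\le q_0$. The point that makes this yield genericity is that, by the initial-segment ordering and the placement of $\delta_M$, every $q'\le q$ satisfies $d_{q'}\cap\delta_M\subseteq M$, so $q'$ can genuinely be matched below $\delta_M$ with a member of $E\cap M$ for any dense $E\in M$; hence $E\cap M$ is predense below $q$ for every dense $E\in M$, $q$ is $(M,\mathbb{T}^\kappa)$-generic, and $\mathbb{T}^\kappa$ is proper.

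The main obstacle, common to both arguments, is the tension between closedness of the $d_p$ and the specialising condition: a countable closed set can meet some $\operatorname{lim}C_\lambda$ in a set on which $f$ is already surjective onto $\omega$, leaving no colour available for $\lambda$. Defeating this --- both when driving $\ot(D)$ up to $\omega_1$ and when forming the generic condition at $\delta_M$ --- is precisely what forces the finite reservation functions into the definition of the poset and what makes clause (3) indispensable, since a persistent obstruction of this type is exactly a thread through $\vec C$. Everything else is the routine machinery of countably-closed-type specialising forcings: the density facts about $D$ and $f$, the limit bookkeeping of the reservations, and the verification of $(M,\mathbb{T}^\kappa)$-genericity.
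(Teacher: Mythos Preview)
Your approach differs substantially from the paper's. The paper follows Todorcevic's original method of \emph{finite} conditions with models as side conditions: a condition is a pair $((N_\alpha \mid \alpha \in E), f)$ where $E \in [\omega_1]^{<\omega}$, the $N_\alpha$ are drawn from an elementary chain of countable elementary submodels of $H(\kappa^+)$, and $f$ specializes the finite set $\{\sup(N_\alpha \cap \kappa) : \alpha \in E\}$. Properness is obtained by inserting the given model $M$ itself into the finite chain. You instead use \emph{countable} closed approximations ordered by end-extension. That is a legitimate alternative paradigm, but your sketch contains a genuine error and a genuine gap.

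The error is in the properness argument. With the end-extension ordering you specify, two conditions are compatible only when one extends the other. Once you form $q_0 = (d_p \cup \{\delta_M\}, f_p \cup \{(\delta_M, k)\})$, any condition $r \in M$ compatible with $q_0$ must satisfy $d_r \subseteq d_p$ (since $\delta_M \notin M$ forces $d_r$ to be an initial segment of $d_{q_0}$ not containing $\delta_M$), hence $r \ge p$. So no $q \le q_0$ can be compatible with any proper extension of $p$ lying in $M$, and you cannot ``meet dense sets inside $M$'' below $q_0$ as you claim. The correct order is the reverse: first build a descending sequence $p = p_0 \ge p_1 \ge \cdots$ inside $M$ meeting all dense sets of $M$ with $\sup\bigcup_n d_{p_n} = \delta_M$, and only \emph{then} adjoin $\delta_M$ on top.

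The gap is that the ``reservation functions'' you suppress are not bookkeeping but the substance of the proof. After the corrected construction one must show that some colour $k$ remains available for $\delta_M$, i.e.\ that $f[\, d \cap \operatorname{lim} C_{\delta_M}\,] \ne \omega$ where $d = \bigcup_n d_{p_n}$. Nothing in the poset you actually wrote down prevents $d \cap \operatorname{lim} C_{\delta_M}$ from being infinite with $f$ surjecting onto $\omega$ there; the side data must be designed precisely to block this, and the way hypothesis (3) enters to make that design succeed is the entire content of the theorem. The paper's finite-side-conditions forcing handles this by a different mechanism --- the models in the condition control where the suprema can land --- which is why that formulation is the standard one here.
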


\begin{proof}[sketch of a proof]
For $\alpha, \beta \in \Gamma$ we let $\beta <_T \alpha$ if and only if 
$\beta$ is a limit point of $C_{\alpha}$. This induces a tree ordering on $\Gamma$.

Next we turn to some very useful definitions. For a regular, sufficiently large cardinal $\lambda$, an elementary chain $(M_{\alpha} \prec H(\lambda) \mid \alpha < \omega_1)$ is a sequence of elementary submodels of $H(\lambda)$ which additionally 
satisfies that $M_{\alpha} \subset M_{\beta}$ and $M_{\alpha} \in M_{\beta}$ whenever $\alpha < \beta$; and $M_{\alpha} = \bigcup_{\beta < \alpha} M_{\beta}$ if $\alpha$ is a limit ordinal.

We now define the desired forcing $\mathbb{T}^{\kappa}$, which is a prototype for the extremely useful class of forcing with side conditions, as follows:
A condition $p \in \mathbb{T}^{\kappa}$ is a pair $( (N_{\alpha} \mid \alpha \in E) , f )$ where
\begin{enumerate}
\item $E$ is a finite subset of $\omega_1$ and there is an elementary chain $(M_{\alpha} \mid \alpha < \omega_1)$ of countable, elementary substructures of $H(\kappa^+)$ such that $N_{\alpha} = M_{\alpha}$ for each $\alpha \in E$.
\item $f$ is a specializing function, i.e. if $\delta_{N_{\alpha}} := \operatorname{sup} (N_{\alpha} \cap \kappa$ then $f$ is a function from $\{ \delta_{N_{\alpha}} \mid \alpha \in E \}$ into $\omega$ such that $f(\gamma) \ne f(\delta) $ whenever $\gamma <_T \delta$.
\end{enumerate}

The key assertion is now that $\mathbb{T}^{\kappa}$ is proper provided we assume that there is no $\kappa$-chain in the tree $T$ on $\Gamma$.

Assuming that $\mathbb{T}^{\kappa}$ is proper, it is standard argument to verify that
$\mathbb{P}$ indeed adds an $\omega_1$-closed set $D$ and a specializing $f :D \rightarrow \omega$.
\end{proof}

From now on we assume that $0^{\#}$ does not exist.
Let $(C_{\alpha} \mid \alpha \in \text{Sing} \}$ be Jensen's global $\square$-sequence in $L$; here Sing denotes the set of limit ordinals $\alpha$ for which $cf(\alpha)^L < \alpha$. Recall that
\begin{enumerate}
\item $\forall \alpha \in \text{Sing}  (C_{\alpha} \subset \alpha)$ and $C_{\alpha}$ is club.
\item $\forall \alpha \in \text{Sing} (\text{o.t.} (C_{\alpha}) <\alpha)$.
\item $\forall \beta \in \text{Lim} (C_{\alpha} ) 
 (\beta \in \text{Sing} \land C_{\beta} = C_{\alpha} \cap \beta)$.
\end{enumerate}

We define a tree $T$ on Sing via setting
$\alpha <_T \beta$ iff $\alpha$ is a limit point of $C_{\beta}$. Note that for a cardinal $\lambda$, the tree for the corresponding $\square_{\lambda}$-sequence does not have a $\lambda^{+}$-branch, thus $\square_{\lambda}$ satisfies the three properties from Todorcevic's theorem.
 As a consequence, the proper poset $\mathbb{T}^{\lambda^{+}}$ adds a closed set $D$ in $(\lambda^{+})^L$ 
of ordertype $\omega_1$ and simultaneously specializes the tree $T$ restricted on ordinals in $D$,  i.e. the forcing adds a function $f: D \rightarrow \omega$ such that for $\alpha , \beta \in D$, if $\alpha <_T \beta$ then $f(\alpha) \ne f (\beta)$.

Consider now the $\Sigma_1$-formula $\psi(x,y)$ which says that $x$ is a set of ordinals which are all singular in $L$, $x$ is closed and of ordertype $\omega_1$; $y$ is a function from $x$ into $\omega$ such that $y(\alpha) \ne y(\beta)$ whenever $\alpha, \beta \in x$ and $\alpha \in \text{lim} \, C_{\beta}$.

We claim that if $\theta = sup (D) $ and  $D \subset \theta' < \omega_2$, and if $f: D \rightarrow$ is such that for  $\alpha , \beta \in D$, if $\alpha <_T \beta$ then $f(\alpha) \ne f (\beta)$, then $\theta'$ is a regular cardinal in $L$.

Indeed assume $\theta'$ were singular in $L$, then $C_{\theta'}$ would have been defined, so $D \cap \text{lim} \, C_{\theta'}$ is uncountable. In particular, there is $\alpha < \beta \in D$ such that $f(\alpha)= f(\beta)$, yet $\alpha<_T \beta$, which is a contradiction.
To summarize:
\begin{theorem}
For every $L$-regular cardinal $\kappa$, there is a proper forcing denoted by $\mathbb{T}^{\lambda}$ and a $\Sigma_1$-formula $ \phi$ such that
in the generic extension $V[\mathbb{T}^{\kappa}]$,
\[ V[\mathbb{T}^{\kappa}] \models \phi(\kappa) \Leftrightarrow \kappa \text{ is a regular cardinal in $L$ } \]

\end{theorem}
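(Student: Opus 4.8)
The statement is essentially a summary of the construction carried out just above, so the plan is simply to record the two halves cleanly. I would concentrate on the case $\kappa=(\lambda^{+})^{L}$ for an infinite cardinal $\lambda$ of $L$, which is the only case used in the sequel; a regular limit cardinal of $L$ that is not weakly compact in $L$ is handled in exactly the same way, with the restriction to $\kappa$ of Jensen's global $\square$-sequence playing the role of $\square_{\lambda}$. First I would fix the $\square_{\lambda}$-sequence and note, as above, that its tree on $\kappa$ has no cofinal branch, so Todorcevic's theorem produces a \emph{proper} poset $\mathbb{T}^{\kappa}$ forcing the existence of a closed cofinal $D\subseteq\kappa$ of order type $\omega_{1}$ (in particular $\mathbb{T}^{\kappa}$ forces $\cf(\kappa)=\omega_{1}$) together with $f\colon D\to\omega$ such that $f(\alpha)\neq f(\beta)$ whenever $\alpha,\beta\in D$ and $\alpha\in\operatorname{lim}C_{\beta}$.

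Next I would let $\phi(x)$ be the $\Sigma_{1}$-formula asserting: \emph{there are $D$ and $f$ with $x=\sup D$, $D$ closed, $\ot(D)=\omega_{1}$, every element of $D$ singular in $L$, $f\colon D\to\omega$, and $f(\alpha)\neq f(\beta)$ whenever $\alpha,\beta\in D$ and $\alpha\in\operatorname{lim}C_{\beta}$}. The verification that $\phi$ is genuinely $\Sigma_{1}$, and \emph{uniformly} so (no parameter beyond $x$, and in particular no dependence on $\kappa$), rests on the facts that constructibility is $\Sigma_{1}$ and that the map $\gamma\mapsto C_{\gamma}$ giving the $\gamma$-th block of Jensen's global $\square$-sequence, as well as the predicate ``$\gamma$ is singular in $L$'', are uniformly $\Sigma_{1}$; the remaining clauses of $\phi$ are arithmetic in these.

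The proof then breaks into a soundness step and an activation step. For soundness I would argue that, in any transitive model of a sufficient fragment of $\ZFC$ computing $L$ correctly, $\phi(\theta')$ implies that $\theta'$ is regular in $L$: given witnesses $D,f$, suppose $\theta'$ were singular in $L$; then $C_{\theta'}$ is defined and club in $\theta'$, so $\operatorname{lim}C_{\theta'}$ is club in $\theta'$, and since $D$ is closed and cofinal in $\theta'$ of order type $\omega_{1}$ we get $\cf(\theta')=\omega_{1}$ and hence $D\cap\operatorname{lim}C_{\theta'}$ is uncountable; by the coherence of the $\square$-sequence, any two $\alpha<\beta$ in $D\cap\operatorname{lim}C_{\theta'}$ satisfy $C_{\beta}=C_{\theta'}\cap\beta$ and $\alpha\in\operatorname{lim}C_{\beta}$, so $f$ would embed an uncountable set injectively into $\omega$, a contradiction. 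For activation I would pass to $V[\mathbb{T}^{\kappa}]$: since $\mathbb{T}^{\kappa}$ is proper it preserves $\omega_{1}$, so the generic $D$ still has order type $\omega_{1}$ and satisfies $\sup D=\kappa$; its members are suprema of countable sets, hence of cofinality $\omega$, hence singular in $L$ because $0^{\#}$ does not exist; and $f$ is specializing by construction. Thus $D,f$ witness $\phi(\kappa)$ in $V[\mathbb{T}^{\kappa}]$, while $\kappa$ is regular in $L$ by hypothesis, so in $V[\mathbb{T}^{\kappa}]$ both sides of the claimed equivalence hold.

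I do not expect a serious obstacle: everything of substance appears in the discussion preceding the statement, and the proof is an assembly. The two points I would be careful about are (i) that $\phi$ is uniformly $\Sigma_{1}$ with no dependence on $\kappa$, so that a single formula works for every $L$-regular cardinal, and (ii) in the activation step, that the members of the generic club $D$ really are singular in $L$ — this is precisely where the standing hypothesis that $0^{\#}$ does not exist (equivalently, the covering lemma) is used. The soundness direction, by contrast, is a direct and self-contained consequence of the coherence of Jensen's global $\square$-sequence.
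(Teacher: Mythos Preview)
Your proposal is correct and follows essentially the same route as the paper: you use Todorcevic's proper poset applied to the restriction of Jensen's global $\square$-sequence, define the same $\Sigma_1$ witness formula, and run the identical coherence-plus-pigeonhole contradiction for the soundness direction. If anything you are more careful than the paper, which does not explicitly verify that the members of the generic $D$ lie in $\mathrm{Sing}$ nor spell out why $\phi$ is uniformly $\Sigma_1$; your attention to point (ii) and the role of the no-$0^{\#}$ hypothesis is well placed (and becomes trivial in the paper's actual application, where the ground model is $L$).
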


\subsection{Coding machinery}

When applied over $L$, the $\vec S$-coding forcings can be used to surgically alter the truth-value of certain projective formulas. These formulas and the $\vec S$-coding forcings will be used to form projective predicates $\Phi_{n,m}$, for $n,m \in \omega$, where each $\Phi_{n,m}$ is a $\Sigma^1_n$-formula which will eventually serve as our graph of our uniformizing functions for $A_{n,m}$, where $A_{n,m}$ is the $m$-th $\Sigma^1_n$-set in the plane.

We first describe how to use the $\vec S$-coding forcings to obtain suitable $\Sigma^1_n$-predicates, whose truth-value can be manipulated using the right forcings.

We work over $L$ as our ground model. Let $r \in L$ be a real. Note that over $L$ there  is a sequence $$\vec{S}=( S_ \alpha\mid \alpha\in \Lim)$$ uniformly definable satisfying
    \begin{itemize}
        \item $S_\alpha\subset \alpha$
        \item If $\alpha$ is a regular cardinal, then $S_\alpha$ is stationary co-stationary in $\alpha\cap \Cof(\omega)$.
    \end{itemize}
     The existence of such a sequence $( S_ \alpha\mid \alpha\in \Lim(\delta))$  follows from the fact that $\diamondsuit_{\lambda}$ holds in $L$ for any $L$-cardinal $\lambda$ and is a routine construction. 

 We aim to find first a $\Sigma^1_3$-formula $\Phi$ such that $L \models \lnot \exists x \Phi (x)$, yet for $\eta \in$ Ord there is a coding forcing Code$(r, \eta) \in L$ such that after forcing with Code$(r,\eta)$, in the resulting universe $L[\text{Code} (r,\eta)] \models \Phi (r)$ and $\forall  s (s \ne r \rightarrow \lnot \Phi(s) )$ does hold.

The desired forcing Code($r,\eta$) will be itself a three step iteration denoted by $\forceQ^0 (r,\eta) \ast \dot{\forceQ}^1(r,\eta) \ast \dot{\forceQ}^2 (r,\eta)$, where the first and second factor are iterations themselves, and we will describe it now.
Given our real $r$ and $\eta \in L $ such that $\eta$ is a limit cardinal in $L$, we first look at the $\omega$-block of $L$-cardinals which follow $\eta$, that is we form the interval $[\eta , (\eta^{+ \omega})^L )$.

The first factor of the iteration, denoted by $\forceQ^0 (r,\eta)$, will use a fully supported $\omega$-length iteration, with $\mathbb{T}^{\eta^{+i}}$ for $i \in \omega$ as factors. Consequentially, after forcing with this partial order, each $L$-cardinal in the interval $[\eta, \eta^{+\omega})$ is $\Sigma_1$-definable using $\omega_1$ as the only parameter in its definition.

 For the second factor of our coding forcing we first identify the real $r$ with the according subset of $\omega$ and form the second factor of $\text{Code} (r,\eta)$ as follows:
$$\forceQ^1(r,\eta):=\prod_{j\in r}Club(S_{\eta+2j}) \times \prod_{j\notin r}Club(S_{\eta+2j+1})$$ using countable support.
 
After the two forcings are done we define (in the resulting generic extension) $ X \subset \omega_1$ to be the $<$-least set  (in some previously fixed well-order of some sufficiently large $H(\theta)$) which codes the following objects:
\begin{itemize}
\item The $<$-least set of closed sets of $\eta^{+i}$ of ordertype $\omega_1$ and the according specializing functions, both added with the factors of the forcing of the form $\mathbb{T}^{\eta^{+i}}$. These sets together ensure the $\Sigma_1$-definability of the $L$-cardinals in the interval $[\eta, \eta^{+ \omega})$.
\item And $\omega$-many $\omega$-closed, unbounded subsets through some of the canonically definable stationary sets $S_{\eta^{+i}} \subset \eta^{+i} \cap \operatorname{cof} (\omega)$. These $\omega$-closed sets are so chosen that the
characteristic function of $r$ can be read off. That is  we collect $\{ c_{\eta^{+i}} \subset S_{\eta^{+i}} \, : \, i=  2n \land n \notin r \}$ and  $\{ c_{\eta^{+i}} \subset S_{\eta^{+i}} \, : \, i=  2n+1 \land n \in r \}$.
\end{itemize}

As mentioned already, when working in $L[X]$ then
 we can read off $r$  via looking at the $\omega$-block of $L$-cardinals starting at $\eta$ and determine which canonical $L$-stationary set $S_{\eta^{+i}}$ contains  an $\omega$-closed, unbounded set in $L[X]$:
\begin{itemize}
 \item[$(\ast)$]  $n \in r$ if and only if $S_{\eta^{+(2n+1)}}$ contains an $\omega$-closed unbounded set, and $n \notin r$ if and only if $S_{\eta^{+(2n)}}$ contains an $\omega$-closed unbounded set.
\end{itemize}
Indeed this follows readily from Lemma \ref{lem: stationary preservation}.

We note that we can apply an argument resembling David's trick in this situation. We rewrite the information of $X \subset \omega_1$ as a subset $Y \subset \omega_1$ using the following line of reasoning.
It is clear that any transitive, $\aleph_1$-sized model $M$ of $\ZFP$ which contains $X$ will be able to correctly decode out of $X$ all the information.
Consequentially, if we code the model $(M,\in)$ which contains $X$ as a set $X_M \subset \omega_1$, then for any uncountable $\beta$ such that $L_{\beta}[X_M] \models \ZFP$ and $X_M \in L_{\beta}[X_M]$:
\[L_{\beta}[X_M] \models ``\text{The model decoded out of }X_M \text{ satisfies $(\ast)"$.} \]
In particular there will be an $\aleph_1$-sized ordinal $\beta$ as above and we can fix a club $C \subset \omega_1$ and a sequence $(M_{\alpha} \, : \, \alpha \in C)$ of countable elementary submodels  of $L_{\beta} [X_M]$ such that
\[\forall \alpha \in C (M_{\alpha} \prec L_{\beta}[X_M] \land M_{\alpha} \cap \omega_1 = \alpha)\]
Now let the set $Y\subset \omega_1$ code the pair $(C, X_M)$ such that the odd entries of $Y$ should code $X_M$ and if $Y_0:=E(Y)$ where the latter is the set of even entries of $Y$ and $\{c_{\alpha} \, : \, \alpha < \omega_1\}$ is the enumeration of $C$ then
\begin{enumerate}
\item $E(Y) \cap \omega$ codes a well-ordering of type $c_0$.
\item $E(Y) \cap [\omega, c_0) = \emptyset$.
\item For all $\beta$, $E(Y) \cap [c_{\beta}, c_{\beta} + \omega)$ codes a well-ordering of type $c_{\beta+1}$.
\item For all $\beta$, $E(Y) \cap [c_{\beta}+\omega, c_{\beta+1})= \emptyset$.
\end{enumerate}
We obtain
\begin{itemize}
\item[$({\ast}{\ast})$] For any countable transitive model $M$ of $\ZFP$ such that $\omega_1^M=(\omega_1^L)^M$ and $ Y \cap \omega_1^M \in M$, $M$ can construct its version of the universe $L[Y \cap \omega_1^N]$, and the latter will see that there is an $\aleph_1^M$-sized transitive model $N \in L[Y \cap \omega_1^N]$ which models $(\ast)$ for $r$ and $\eta$.
\end{itemize}
Thus we have a local version of the property $(\ast)$.

We know define the third, and last factor of $\operatorname{Code} (r,\eta)$, working in $L[\forceQ^0(r,\eta)] [\dot{\forceQ}^1(r,\eta) ]$ we shall define $\forceQ^2 (r,\eta)$ as follows. We use almost disjoint forcing $\mathbb{A}_D(Y)$ relative to our previously defined, almost disjoint family of reals $D \in  L $ (see the paragraph after Definition 2.5)  to code the set $Y\subset \omega_1$ into one real $r$. This forcing only depends on the subset of $\omega_1$ we code, thus $\mathbb{A}_D(Y)$ will be independent of the surrounding universe in which we define it, as long as it has the right $\omega_1$ and contains the set $Y$.

We finally obtained a real $R$ such that
\begin{itemize}
\item[$({\ast}{\ast}{\ast})$] For any countable, transitive model $M$ of $\ZFP$ such that $\omega_1^M=(\omega_1^L)^M$ and $ R  \in M$, $M$ can construct its version of $L[R]$ which in turn thinks that there is a transitive $\ZFP$-model $N$ of size $\aleph_1^M$  such that $N$ believes $(\ast)$ for $r$ and $\eta$.
\end{itemize}
Note that $({\ast} {\ast} {\ast})$ is a $\Pi^1_2$-formula in the parameters $R$ and $r$. We will often suppress the sets$r,\eta $ when referring to $({\ast} {\ast} {\ast})$ as they will be clear from the context. We say in the above situation that the real $r$ \emph{ is written into $\vec{S}$}, or that $r$ \emph{is coded into} $\vec{S}$ (at $\eta$) and $R$ witnesses that $r$ is coded (at $\eta$). 
 
 The projective and local statement $({\ast} { \ast} {\ast} )$, if true,  will determine how certain inner models of the surrounding universe will look like with respect to branches through $\vec{S}$.
That is to say, if we assume that $({\ast} { \ast} {\ast} )$ holds for a real $r$ and is the truth of it is witnessed by a real $R$. Then $R$ also witnesses the truth of $({\ast} { \ast} {\ast} )$ for any transitive $\ZFP$-model $M$ which contains $R$ (i.e. we can drop the assumption on the countability of $M$).
Indeed if we assume 
that there would be an uncountable, transitive $M$, $R \in M$, which witnesses that $({\ast} { \ast} {\ast} )$ is false. Then by L\"owenheim-Skolem, there would be a countable $N\prec M$, $R \in N$ which we can transitively collapse to obtain the transitive $\bar{N}$. But $\bar{N}$ would witness that $({\ast} { \ast} {\ast} )$ is not true for every countable, transitive model, which is a contradiction.

Consequentially, the real $R$ carries enough information that
the universe $L[R]$ will see that certain $L$-stationary sets from $\vec{S}$ have clubs in that
\begin{align*}
n \in r \Rightarrow L[R] \models  ``S_{\eta^{+ (2n+1)}} \text{ contains an $\omega$-closed unbounded set}".
\end{align*}
and
\begin{align*}
n \notin r\Rightarrow L[R] \models ``S_{\eta^{ + (2n)} }\text{ contains an $\omega$-closed unbounded set}".
\end{align*}
Indeed, the universe $L[R]$ will see that there is a transitive $\ZFP$-model $N$ which believes $(\ast)$. The latter being coded into $R$. But by upwards $\Sigma_1$-absoluteness, and the fact that $N$ can compute $\vec{S}$ correctly, if $N$ thinks that some $L$-stationary set in $\vec{S}$ contains an $\omega$-closed, unbounded set, then $L[R]$ must think so as well.

\section{Suitable $\Sigma^1_n$-predicates}
We shall use the $\Sigma^1_3$-predicate ``being coded into $\vec{S}"$ (we will often write just ``being coded$"$ for the latter) to form suitable $\Sigma^1_n$-predicates $\Phi^n$ for every $n \in \omega$. These predicates share the following properties:
\begin{enumerate}
\item $L \models \forall x  \lnot(\Phi^n (x))$
\item For every real $x \in L$, there is a coding forcing $\operatorname{Code}^n (x) \in L$ such that after forcing with it, $L[\operatorname{Code}^n (x)] \models \Phi^n (x)$, and for every real $y \ne x$, $L[\operatorname{Code}^n (x)] \models \lnot \Phi^n(y)$.
\end{enumerate}
Most importantly, these properties remain true even when iterating the coding forcings
$\operatorname{Code}^n (x_i)$ for a sequence of (names of) reals.

The predicates $\Phi^n(x)$ will be defined now. 
\begin{itemize}
\item $\Phi^3 (x,y,m) \equiv \exists a_0 ( (x,y,m,a_0)$ is coded into $\vec{S})$.
\item $\Phi^4(x,y,m) \equiv \exists a_0 \forall a_1 ( (x,y,m,a_0,a_1)$ is not coded into $\vec{S} )$.
\item $\Phi^5(x,y,m) \equiv \exists a_0 \forall a_1 \exists a_2 ( (x,y,m,a_0, a_1,a_2)$ is coded into $\vec{S} )$.
\item $\Phi^6 (x,y,m) \equiv \exists a_0 \forall a_1 \exists a_2 a_3 ( (x,y,m,a_0,a_1, a_2,a_3)$ is not coded into $\vec{S})$.
\item ...
\item ...
\item $\Phi^{2n} (x,y,m) \equiv \exists a_0 \forall a_1...\forall a_{2n} (( x,y,m,a_0,...,a_{2n} )$ is not coded into $\vec{S} )$.
\item $\Phi^{2n+1} (x,y,m) \equiv \exists a_0 \forall a_{1}...\exists a_{2n+1} (( x,y,m,a_0,...,a_{2n+1})$ is coded into $\vec{S} )$.
\item ...
\item...

\end{itemize}
Each predicate $\Phi^n$ is exactly $\Sigma^1_n$.
In the choice of our $\Sigma^1_n$-formulas $\Phi^n(x)$, we encounter again a periodicity phenomenon, that is two different cases depending on $n\in \omega$ being even or odd, a theme which is pervasive in this area.
It is clear that for each predicate $\Phi^n$ and each given real $x$ there is a way to create a universe in which $\Phi^n (x)$ becomes true using our coding forcings. We just need to iterate the relevant coding forcings using countable support.
\begin{lemma}
Let $n \in \omega$ and let $x$ be a real in our ground model $L$. Then there is a forcing $\operatorname{Code}^n (x)$ such that if $G \subset \operatorname{Code}^n (x)$ is generic, the generic extension
$L[G]$ will satisfy $\Phi^n(x)$ and for every $y \ne x$, $L[G] \models \lnot \Phi^n(y)$.
This property can be iterated, that is it remains true if we replace $L$ with $L[G]$ in the above.

\end{lemma}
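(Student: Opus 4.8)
\emph{Proof plan.} The plan is to construct $\operatorname{Code}^n(x)$ as a countable support iteration of the basic coding forcings $\operatorname{Code}(r,\eta)$, with a bookkeeping deciding which reals get written into $\vec S$, the precise list depending on the parity of $n$. I will use repeatedly that forcing with $\operatorname{Code}(r,\eta)$ turns the $\Sigma^1_3$ statement ``$r$ is coded into $\vec S$'' true, and that, by Lemma~\ref{lem: stationary preservation}, it leaves ``$s$ is coded into $\vec S$'' false for every $s\ne r$; moreover, if the successive levels $\eta$ are chosen pairwise far apart (say each new $\eta$ a limit cardinal of $L$ above the size of the part of the iteration built so far), codings at distinct levels do not interact, and by Lemma~\ref{lem: preservation} the iteration remains $\vec S$-preserving, so that no $\omega$-club is ever shot through the co-stationary part of an $S_\alpha$ we did not target. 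Thus in the final model $L[G]$ the set of reals coded into $\vec S$ is \emph{exactly} the family $\mathcal F$ of reals nominated by the bookkeeping, and the whole construction reduces to choosing $\mathcal F$ correctly.

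To do that, write $\Phi^n(x)=Q_0a_0\,Q_1a_1\cdots Q_ka_k\,\theta_n(x,a_0,\dots,a_k)$, where $Q_i=\exists$ for even $i$ and $Q_i=\forall$ for odd $i$, and $\theta_n$ is ``$(x,a_0,\dots,a_k)$ is coded into $\vec S$'' when $n$ is odd and ``$(x,a_0,\dots,a_k)$ is not coded into $\vec S$'' when $n$ is even (so $Q_k=\exists$ when $n$ is odd and $Q_k=\forall$ when $n$ is even). For odd $n$ I would take $\mathcal F$ to consist of all reals coding a tuple $(x,0,a_1,0,a_3,0,\dots)$, where the universal slots $a_1,a_3,\dots$ range over all reals of $L[G]$ and the existential slots are all $0$; every member of $\mathcal F$ then begins with $x$. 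For even $n$ I would instead take $\mathcal F$ to consist of all reals coding a tuple $(y,b_0,0,b_2,0,\dots)$, where $y$ ranges over all reals $\ne x$ of $L[G]$, the universal slots $b_0,b_2,\dots$ of $\neg\Phi^n(y)$ range over all reals, and the remaining slots are $0$; now no member of $\mathcal F$ begins with $x$. In both cases there are at most $2^{\aleph_0}$-many tuples to nominate, and a routine catch-your-tail bookkeeping over a sufficiently long countable support iteration of basic codings realises all of them, so that $\mathcal F$ in $L[G]$ is exactly as described (the catch-your-tail step is what forces us to run over all reals of the final model, not just those of $L$).

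The verification of $\Phi^n$ in $L[G]$ is then direct. If $n$ is odd: for $y\ne x$ no coded real begins with $y$, so $\theta_n(y,\vec b)$ is false for every $\vec b$, and since $\Phi^n(y)$ alternates quantifiers ending with $\exists$ over this matrix, $\Phi^n(y)$ fails; while for $x$, the existential player playing $0$ at every existential node (and the opponent any reals $a_1,a_3,\dots$ at the universal nodes) reaches a member of $\mathcal F$, hence a coded real, so $\theta_n(x,\vec a)$ holds along this play and $\Phi^n(x)$ is true. If $n$ is even: no coded real begins with $x$, so $\theta_n(x,\vec a)$ (``not coded'') holds for all $\vec a$ and $\Phi^n(x)$ is true; and for $y\ne x$ one checks $\neg\Phi^n(y)$, i.e.\ $\forall b_0\exists b_1\forall b_2\cdots((y,\vec b)\text{ is coded into }\vec S)$, by answering $0$ at every existential slot to land in $\mathcal F$, so $\neg\Phi^n(y)$ holds and $\Phi^n(y)$ fails. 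This proves the first assertion. The iteration clause follows because everything relativises verbatim to $L[G]$: the sequence $\vec S$ is uniformly definable over any model with the correct $L$-cardinal structure (we assume $0^{\#}$ does not exist), the basic codings and Lemma~\ref{lem: stationary preservation} apply over any extension by an $\vec S$-coding forcing, and choosing the new levels $\eta$ above everything occurring in $G$ prevents any interference with coding work already done; in particular, iterating $\operatorname{Code}^n(x_i)$ along a sequence $\langle x_i\rangle$ yields a model in which $\Phi^n(x_i)$ holds for all $i$ and $\Phi^n(y)$ fails for every $y$ not in the sequence.

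I expect the main obstacle to be precisely the conflict isolated in the third paragraph: ``make $\Phi^n(x)$ true'' and ``make $\Phi^n(y)$ false for all $y\ne x$'' demand opposite behaviour of the coding machinery -- for odd $n$ the first asks us to code reals beginning with $x$ while the second forbids coding reals beginning with any $y\ne x$, and for even $n$ the two roles are exchanged -- and the construction reconciles them only by using the first coordinate ($=x$ versus $\ne x$) as the sole flag, leaning on the sharp ``no accidental coding'' content of Lemma~\ref{lem: stationary preservation}. The even/odd periodicity of the projective pointclasses is exactly what makes the two cases genuinely different, and is the reason the definition of $\Phi^n$ had to split on parity. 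The only other point needing care is the bookkeeping, which must catch its tail over all reals of the eventual model; this is the usual diagonalisation for countable support iterations and goes through using GCH in $L$ together with the $\vec S$-preservation of the iteration supplied by Lemmas~\ref{lem: preservation} and~\ref{lem: stationary preservation}.
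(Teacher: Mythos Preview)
The paper gives no proof of this lemma beyond the remark that one ``just need[s] to iterate the relevant coding forcings using countable support''; your construction over $L$ is precisely such a realisation, and your verification of $\Phi^n(x)$ and $\neg\Phi^n(y)$ in $L[G]$ is correct.

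The iteration clause, however, does \emph{not} ``relativise verbatim'' as you claim. Over $L[G]$ there are already coded tuples --- in the odd case all of them begin with $x$, in the even case they begin with every $y\ne x$ --- so your key step ``no coded real begins with $\cdots$'' fails the moment you pass to a second target $x'\ne x$. Concretely: for odd $n$ you must verify $\neg\Phi^n(x)$ in $L[G][G']$, yet $(x,0,a_1,0,\dots)$ \emph{is} coded for every $a_1\in L[G]$; for even $n$ you must verify $\Phi^n(x')$, yet $(x',b_0,0,\dots)$ \emph{is} coded for every $b_0\in L[G]$. Your construction still succeeds, but for a different reason: the second forcing adds new reals, and choosing the relevant existential witness ($a_1$ for $\neg\Phi^n(x)$ in the odd case, $a_0$ for $\Phi^n(x')$ in the even case) in $L[G][G']\setminus L[G]$ dodges all old codings while, by design, avoiding all new ones. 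That is the missing argument. Note too that this very mechanism refutes your last sentence: after $\operatorname{Code}^n(x_0)\ast\operatorname{Code}^n(x_1)$ the formula $\Phi^n(x_0)$ no longer holds (a new real in the appropriate slot defeats it), so a sequential composition of your forcings yields $\Phi^n$ only at the final target, not at all $x_i$. The lemma, read literally, does not promise more; if you want $\Phi^n$ to persist on the whole sequence you need a single interleaved bookkeeping that revisits every $x_i$ cofinally often, which is a different forcing from the composite $\operatorname{Code}^n(x_0)\ast\operatorname{Code}^n(x_1)\ast\cdots$.
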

\section{Based forcing}

This section introduces a suitable collection of forcings, dubbed based forcings, which we use for the proof. As we proceed in our iteration, we simultaneously refine based forcings, via adding more and more constraints, yielding a decreasing hierarchy of $\alpha$-based forcings.

As a first step, we shall consider the problem of forcing the $\Sigma^1_3$-uniformization property over universes which satisfy the anti-large cardinal hypothesis that $\omega_1= \omega_1^L$ (or more generally satisfy that $\omega_1$ is not inaccessible to reals). Recall our $\Sigma^1_3$-predicate $\Phi^3(x)$ defined in the last two sections, and the attached coding forcing $\operatorname{Code} (x)$.
There is a very easy strategy to force the $\Sigma^1_3$-uniformization property for universes $V$ which satisfy $\omega_1=\omega_1^L$ provided $V$ thinks that $\Phi^3(x)$ does not hold for every real $x \in V$: we pick some bookkeeping function $F \in V$ which should list all the (names of) reals in our iteration and at each stage $\beta$, under the assumption that we created already $V[G_{\beta}]$, if $F(\beta)$ lists a real $x\in V[G_{\beta}]$ and a natural number $k$, we ask whether there is a real $y$ such that
\[ V[G_{\beta}] \models \varphi_k (x,y) \]
holds, where $\varphi_k$ is the $k$-th $\Sigma^1_3$-formula. If so, we pick the least such $y$ (in some fixed well-order), and let the value of the desired $\Sigma^1_3$-uniformizing $f_k(x)$ to be $y$. Additionally we force with $\operatorname{Code} (x,y,k)$ over $V[G_{\beta}]$ and obtain $V[G_{\beta+1}]$. 
The resulting universe $V[G_{\beta+1}]$ will satisfy that $\Phi^3(x,y,k)$ is true, whereas $\Phi^3(x,y',k)$ is not true for each $y' \ne y$. Moreover, because of upwards absoluteness of $\Sigma^1_3$-formulas, this property will remain true in all further generic extensions we create.

If we iterate long enough in order to catch our tail, the final model $V[G]$ will 
satisfy the $\Sigma^1_3$-uniformization property via
\[f_k (x)=y \Leftrightarrow \Phi^3 (x,y,k). \]
To summarize, the easy strategy to force $\Sigma^1_3$-uniformization is to consider at each step some $x$-section of some $\Sigma^1_3$-set $A_k \subset \omega^{\omega} \times \omega^{\omega}$, and if non-empty, pick the least $y$ for which $A_k(x,y)$ is true. Then force to make $\Phi^3(x,y,k)$ true and repeat.

Based forcings use this strategy for $\Sigma^1_3$-sets, while putting no constraints on $\Sigma^1_n$-sets for $n > 3$. Anticipating that we aim for a universe of $\BPFA$, we let $\kappa$ be the least reflecting cardinal in $L$.

\begin{definition}
Let $ \lambda < \kappa$ and let $F: \lambda \rightarrow H(\kappa)$ be a bookkeeping function. We say that an iteration $(\forceP_{\beta}, \dot{\forceQ}_{\beta} \mid \beta < \lambda)$ is (0-)based with respect to $F$ if the iteration is defined inductively via the following rules:
Assume that $F(\beta)= (\dot{x}, 3, \dot{k})$ where $\dot{x}$ is a $\forceP_{\beta}$-name of a real
and $\dot{k}$ a $\forceP_{\beta}$-name of a natural number. Also assume that $V[G_{\beta}] \models \exists y (\varphi_{3,k}(x,y))$ and let $\dot{y}$ be the $<$-least name of such a real in some fixed well-order of $H(\kappa)$.
Then let $\dot{\forceQ}_{\kappa}^{G_{\beta}}$ be $\operatorname{Code} (x,y,3,k)$.
If $V[G_{\beta}] \models \exists y (\phi_{3,k}(x,y))$ is not true use the trivial forcing.

Otherwise, we let the bookkeeping decide whether it provides us with a some reals $y,a_0,a_1,...,a_n$ and whether a  tuple of the form $(x,y,a_0,...,a_n,m,k)$ is coded or not.

\end{definition}

\subsection{Strategy to obtain global $\Sigma$-uniformization}
We shall describe the underlying idea to force the global $\Sigma$-uniformization.
The definition of the factors of the iteration will depend on whether the formula
$\varphi_m$ we consider at our current stage is in $\Sigma^1_{2n}$ or in $\Sigma^1_{2n+1}$ where $n\ge 2$. We start with the case where $\varphi_m$ appears first on an odd level of the projective hierarchy.
Assume that $\varphi_m \equiv \exists a_0 \forall a_1 \exists a_2 ... \exists a_{2n-2} \psi( x,y,a_0,a_1,... a_{2n} )$  is a $\Sigma^1_{2n+1}$-formula (where $\psi(x,y,a_0,...a_{2n})$ is a $\Pi^1_2$-formula quantifying over the two remaining variables $a_{2n-1}$ and $a_{2n}$) and  $x$ is a real. We want to find a value for the uniformization function for $\varphi_m$ at the $x$-section.

To start, we list all triples of reals $ ((x,y^0,a_0^0) , (x, y^1, a_0^1), (x,y^2,a_0^2) ,...)$ according to some fixed well-order $<$.  There will be a $<$-least triple $(x,y^{\alpha}, a_0^{\alpha} )$ for which $\forall a_1 \exists a_2... \exists a_{2n-2} \psi (x,y^{\alpha},a^{\alpha}_0, a_1,a_2...)$ is true, otherwise the $x$-section would be empty and there is nothing to uniformize.

The goal will be to set up the iteration in such a way that all triples $(x,y^{\beta} ,a_0^{\beta})$, $\beta > \alpha$ will satisfy the following formula, which is $\Pi^1_{2n+1}$ in the parameters $( x , y^{\beta}, a^{\beta}_0 ,m)$ as is readily checked:
\begin{align*}
\forall a_{1} \exists a_{2} \forall a_{3}... \exists a_{2n-2} (( x,y^{\beta} ,m, a_0^{\beta},a_1,a_2...,a_{2n-2} )\text{ is not coded into $\vec{S} )$.}
\end{align*}
At the same time the definition of the iteration will ensure that $(x,y^{\alpha}, a^{\alpha}_0)$ will satisfy 
\begin{align*}
\exists a_{1} \forall a_{2} \exists a_{3}... \forall a_{2n-2} (( x,y^{\alpha} ,m, a_0^{\alpha},a_1,a_2...,a_{2n-2} )\text{ is  coded into $\vec{S} )$.}
\end{align*}
Provided we succeed, the pair $(x,y^{\alpha})$ will then be the unique solution to the following formula, which is $\Sigma^1_{2n+1}$, and which shall be the defining formula for our uniformizing function:
\begin{align*}
\sigma_{\text{odd}}(x,y,m) \equiv \exists a_0 ( \forall a_1 \exists a_2...\psi (x,y,a_0,a_1,...) \land \\
\lnot ( \forall a_1 \exists a_2... ( ( x,y ,m, a_0 ,a_1 ,a_2...) \text{ is not coded into $\vec{S}$}) )
\end{align*}
Indeed, for all $\beta> \alpha$, $(x,y^{\beta},a^{\beta}_0)$ can not satisfy the second subformula of $\Psi$ whereas for all
$\beta< \alpha$, $(x,y^{\beta} , a_0^{\beta}$ can not satisfy the first subformula, as $(x,y^{\alpha}, a_0^{\alpha} )$ is the least such triple.

If we assume that $\varphi_m$ is on an even level of the projective hierarchy we will define things in a dual way to the odd case. 
Assume that $\varphi_m \equiv \exists a_0 \forall a_1 \exists a_2 ... \forall a_{2n-3} \psi( x,y,a_0,a_1,... a_{2n-1} )$ is a $\Sigma^1_{2n}$-formula and  $x$ is a real. We want to find a value for the uniformization function for $\varphi_m$ at the $x$-section.

Again, we list all triples of reals $ ((x,y^0,a_0^0) , (x, y^1, a_0^1), (x,y^2,a_0^2) ,...)$ according to some fixed well-order $<$.  There will be a $<$-least triple $(x,y^{\alpha}, a_0^{\alpha} )$ for which $\forall a_1 \exists a_2... \forall a_{2n-3} \psi (x,y^{\alpha},a^{\alpha}_0, a_1,a_2...)$ is true, otherwise the $x$-section would be empty and there is nothing to uniformize.

The goal will be to set up the iteration in such a way that all triples $(x,y^{\beta} ,a_0^{\beta})$, $\beta > \alpha$ will satisfy the following formula, which is $\Pi^1_{2n}$ in the parameters $( x , y^{\beta}, a^{\beta}_0,m)$ as is readily checked:
\begin{align*}
\forall a_{1} \exists a_{2} \forall a_{3}... \exists a_{2n-3} (( x,y^{\beta} ,m, a_0^{\beta},a_1,a_2...,a_{2n-3} )\text{ is coded into $\vec{S} )$.}
\end{align*}
At the same time the definition of the iteration will ensure that $(x,y^{\alpha}, a^{\alpha}_0)$ will satisfy 
\begin{align*}
\exists a_{1} \forall a_{2} \exists a_{3}... \exists a_{2n-3} (( x,y^{\alpha} ,m, a_0^{\alpha},a_1,a_2...,a_{2n-3} )\text{ is not  coded into $\vec{S} )$.}
\end{align*}
Provided we succeed, the pair $(x,y^{\alpha})$ will then be the unique solution to the following formula, which is $\Sigma^1_{2n}$, and which shall be the defining formula for our uniformizing function:
\begin{align*}
\sigma_{\text{even}} (x,y,m) \equiv \exists a_0 ( \forall a_1 \exists a_2...\psi (x,y,a_0,a_1,...) \land \\
\lnot ( \forall a_1 \exists a_2... ( ( x,y ,m, a_0 ,a_1 ,a_2...) \text{ is coded into $\vec{S}$}) )
\end{align*}

\subsection{Definition of the iteration according to the strategy.}

We shall elaborate on the ideas presented in the last section. Our goal is to define an iteration such that our formulas $\sigma_{\text{even}}$ and $\sigma_{\text{odd}}$ work as defining formulas for the uniformization functions.
Later we will combine these ideas with the standard iteration to obtain $\BPFA$ from a reflecting cardinal to finally finish the proof of the main theorem.

We will need the following predicates. For an arbitrary real $x$, list the triples
$(x,y^0,a_0^0), (x,y^1,a_0^1), (x,y^2,a_0^2),...)$ according to our fixed well-order $<$.
For each ordinal $\alpha< 2^{\aleph_0}$ we fix
bijections $\pi_{\alpha}: (2^{\aleph_0})^{\alpha} \rightarrow 2^{\aleph_0}$ (we assume w.l.o.g that such a bijection exist as we always force its existence with a proper forcing).

We say \[\Psi( \#\psi, x,y^{\alpha},a_0^{\alpha},b_1,...,b_{n} ) \text{ is true } \] if
for every $\beta  < \alpha$, if we let $\pi_{\alpha}^{-1} (b^k)= (c^k_{\beta})_{\beta < \alpha}$ then
$\psi( x,y^{\beta}, a_0^{\beta}, c^1_{\beta}, c^2_{\beta},...,c^{n}_{\beta})$ is true, where $\psi$ is a $\Pi^1_{2}$-formula.
Note that strictly speaking, $\Psi$ is not first order as it is a conjunction of $\alpha$-many formulas. We will use coding to create a projective predicate which is true whenever $\Psi$ is true.

Let $F : \kappa \rightarrow H(\lambda)$ be some bookkeeping function in $L$ which shall be our ground model and which guides our iteration.
We assume that we have defined already the following list of notions:
\begin{itemize}
\item We do have the notion of $\alpha_{\beta}$-based forcings, which is a subset of the set of based forcings. There is a set of tuples of reals which
is tied to the $\alpha_{\beta}$-based forcings and which consists of tuples of reals we must not use in coding forcings anymore as factors of an $\alpha_{\beta}$-based forcing.

\item We defined already our iteration $\forceP_{\beta} \in L$ up to stage $\beta$.
\item We picked a $\forceP_{\beta}$-generic filter $G_{\beta}$ for $\forceP_{\beta}$ and work, as usual, over $L[G_{\beta}]$.
\end{itemize}

The bookkeeping function $F$ at $\beta$ hands us an $ \omega$-tuple $(\dot{x},\dot{m}, \dot{l}, \dot{b}_0,\dot{b}_1,\dot{b}_2,...)$ where $\dot{x}$ and $\dot{b}_n$ are $\forceP_{\beta}$-names of a reals and $\dot{m}$ and $\dot{l}$ are both $\forceP_{\beta}$-names of natural numbers (in fact at each stage we will only need finitely many of those names of reals, and the rest of the information will be discarded).
We let $x=\dot{x}^{G_{\beta}}$, and define $b_n,m,l$ accordingly. 
Our goal is to define the forcing $\dot{\forceQ}_{\beta}$ we want to use at stage $\beta$, and to define the notion of $\alpha_{\beta}+1$-allowable forcing.
We consider various cases for our definition.

\subsubsection{Case 1}

In the first case, writing $\varphi_m= \exists a_0 \forall a_1... \exists a_{2n-2} \psi (x,y, a_0,...,a_{2n-2} )$, where $\psi$ is a $\Pi^1_2$-formula (and $\varphi_m$ is a 
$\Sigma^1_{2n-1}$ formula for $n \ge 3$)   we assume that the bookkeeping $F$ at $\beta$ hands us
a tuple $c_1,...c_{2n-2}$ of real numbers, $x$, and $\alpha< 2^{\aleph_0}$ with the assigned reals $a_0^{\alpha}$ and $y^{\alpha}$.

Recall that we have our fixed bijection $\pi_{\alpha}:( 2^{\aleph_0})^{\alpha} \rightarrow 2^{\aleph_0}$ (note that such a bijection exists at least in a proper generic extension, so we pass to that universe if necessary)
We assume that
\begin{align*}
\psi (x,y^{}, a^{}_0 ,c_1,...,c_{2n-2}) \text{ is true}. 
\end{align*}

We first define $\alpha_{\beta}+1$-based forcings as just $\alpha_{\beta}$-based with the additional constraints that we must not use any forcings of the form
\begin{align*}
\operatorname{Code} (\# \psi,x,y^{i}, a^{i}_0,e_1,e_2,...,e_{2n-2} )
\end{align*}
as a factor of our iteration, where $i > \alpha$ and $e_1,e_2,...,e_{2n-2}$ are such that
there are reals $r^j_0,r^j_1,...,r^j_{i}$ with $r^j_{\alpha}= c_j$ and
$\pi_i (r^j_0,...,r^j_{i} ) = e_j$ for every $j \in [1,2n-2].$

We use the coding forcing
\begin{align*}
\operatorname{Code} (\# \psi,x,y^{\alpha},a_0^{\alpha}, g_1,g_2,...,g_{2n-2}) =: \dot{\forceQ}_{\beta}
\end{align*}
as the $\beta$-th factor of our iteration, where $g_1,...,g_{2n-2}$ are reals which we are given by the bookkeeping and such that $\Psi (\# \psi,x,y^{\alpha},a_0^{\alpha}, g_1,...,g_{2n-2} )$ is true.

If $\Psi (\# \psi,x,y^{\alpha},a_0^{\alpha}, g_1,...,g_{2n-2} )$ is  not true, then pick the $<$-least tuple $g_1,...,$ $g_{2n-2}$ ) such that $\Psi (\# \psi,x,y^{\alpha},a_0^{\alpha}, g_1,...,g_{2n-2} )$ is true. If there is no such tuple then do nothing.

The upshot of this definition is the following. Suppose we iterate following our rules, and in the final model $L[G_{\kappa}]$, $\exists a_0 \forall a_1 ...\exists a_{2n-2} \psi( x,y,a_0,a_1,...,a_{2n-2} )$ is true and $(x,y,a_0)$ is the $<$-least triple witnessing this.
Then also
\[\exists a_0 \forall a_1 ... \exists a_{2n-2} ((\# \psi,x,y, a_0,a_1,...,a_{2n-2})\text{ is coded) } \] is true in $L[G_{\kappa}]$ by the rules of the iteration.
At the same time for every $(x,y'',a''_0) > (x,y,a_0)$ and for every real $c_1$ there will be a real $b'_2$ such that for every $c_3$ there is a further $b'_4$ and so on such that
$(x,y'',a''_0, \# \psi, c_1,b'_2,...)$ is not coded.

\subsubsection{Case 2}
We again let $\varphi_m= \exists a_0 \forall a_1... \exists a_{2n-2} \psi (x,y, a_0,...,a_{2n-2} )$  and we assume that the bookkeeping $F$ at $\beta$ hands us
a tuple $c_1,...c_{2n-2}$ of real numbers, $x$, and $\alpha< 2^{\aleph_0}$ with the assigned reals $a_0^{\alpha}$ and $y^{\alpha}$.

We assume that

\begin{align*}
\psi (x,y^{}, a^{}_0 ,c_1,...,c_{2n-2}) \text{ is not  true}. 
\end{align*}
In this situation we force with the trivial forcing at the $\beta$-th stage and do not define the notion of $\alpha_{\beta}+1$-based forcing. Instead we continue working with $\alpha_{\beta}$-based forcings.

\subsubsection{Case 3}
This is the dual case to the first one. We assume that
\[\varphi_m \equiv \exists a_0 \forall a_1,..., \forall a_{2n} (\psi (x,y,a_0,...,a_{2n-3} ))\] that is $\varphi_m$ belongs to an even projective level.
We assume that the bookkeeping $F$ at $\beta$ hands us
a tuple $c_1,...c_{2n-3}$ of real numbers, $x$, and $\alpha< 2^{\aleph_0}$ with the assigned reals $a_0^{\alpha}$ and $y^{\alpha}$.

We assume that
\begin{align*}
\psi (x,y^{\alpha}, a^{\alpha}_0 ,c_1,...,c_{2n-3}) \text{ is true}. 
\end{align*}

First we define the $\beta$-the factor of our iteration. We force with
\begin{align*}
\operatorname{Code} (\# \psi,x,y^{i}, a^{i}_0,e_1,e_2,...,e_{2n-3} )=: \dot{\forceQ}_{\beta}
\end{align*} where $i > \alpha$ and $e_1,e_2,...,e_{2n-3}$ are reals given by the bookkeeping which have the property  that
there are reals $r^j_0, r^j_1,...,r^j_{i}$ with $r^j_{\alpha}= c_j$ and
$\pi_i (r^j_0,...,r^j_{i} ) = e_j$ for every $j \in [1,2n-3].$

Next we define the notion of $\alpha_{\beta} +1 $-based which should be $\alpha_{\beta}$-based and the additional constraint that
\begin{align*}
\operatorname{Code} (\# \psi,x,y, \alpha, g_1,g_2,...,g_{2n-3}) =: \dot{\forceQ}_{\beta}
\end{align*}
must not be used as a factor of any future forcing we use in our iteration. Here $g_1,...,g_{2n-3}$ are reals which we are given by the bookkeeping with the property that $\Psi (\# \psi, x,y^{\alpha}, a_0^{\alpha}, g_1,...,g_{2n-3}) $ (see at the beginning of section 2.2 for a definition of $\Psi$) holds true. 

\subsubsection{Case 4}
This is the same as in case 2. We find ourselves in the situation where $\psi(x,y^{\alpha},a_0^{\alpha},c_1,...,c_{2n-3})$ is not true. Then we do nothing.

This ends the definition of our iteration. We shall argue later, that the definition works in that it will produce a universe where $\sigma_{\text{even}}$ and $\sigma_{\text{odd}}$ serve as definitions of uniformizing functions, when applied in the right context.

\begin{definition}
Let $F: \kappa \rightarrow H(\lambda)$ be a bookkeeping function. We say that an iteration $(\forceP_{\beta} ,\dot{\forceQ}_{\beta} \mid \beta < \kappa)$ is suitable if each factor of the iteration either is proper or each factor is obtained using $F$ and the four rules described above.
\end{definition}

\section{Proving the main theorem}

Recall first that $\kappa$ is reflecting whenever it is regular and such that $V_{\kappa}$ is $\Sigma_2$-elementary in the universe $V$. We let $\kappa$ be the least reflecting cardinal in $L$. We let $F: \kappa \rightarrow H(\kappa)$ be such that every element of $H(\kappa)$ has an unbounded pre-image under $F$.
We assume that we arrived at stage $\beta < \kappa$ of our iteration and will describe what to force with now.

\subsection{Definition of the iteration. Odd stages}

Assume that $F(\beta)$ hands us names such that they evaluate with the help of our current generic $G_{\beta}$ to a tuple $(\#\psi,x,y^{\alpha}, a_0^{\alpha}, c_1,...,c_n,...,)$ where $\# \psi$ is the the G\"odelnumber of a $\Sigma^1_{n}$-formula and  $x,y^{\alpha}, a_0^{\alpha}, c_1,...)$ are reals.

In this situation we just follow the one of the four cases in the definition of suitable forcings which applies.

\subsection{Definition of the iteration. Even stages.}
We work towards $\BPFA$ on the even stages of the iteration. Recall that due to J. Bagaria (see \cite{Bag}) $\BPFA$ is equivalent to the assertion that $H(\omega_2) \prec_{\Sigma_1} V^{\forceP}$ for any proper forcing $\forceP$. We shall work towards Bagaria's reformulation of $\BPFA$ adapting the usual way.

Assume that the bookkeeping $F$ at stage $\beta$ hands us a $\Sigma_1$-formula $\sigma$ and parameters $p_0,...,p_l$ of $\sigma$ which are elements of $P(\omega_1) \cap L[G_{\beta} ]$. We ask of $\sigma(p_0,...,p_l)$ whether it holds in a suitable generic extension of $L[G_{\beta}]$. If yes, then as $\kappa$ is reflecting, there is such a forcing already in $L_{\kappa} [G_{\beta}]$.  Moreover the witness to the true $\Sigma_1$-formula can be assumed to have a name in $L_{\kappa} [G_{\beta}]$. We fix a $\gamma < \kappa$ such that $L_{\gamma}$ is $\Sigma_2$-elementary in $L_{\kappa}$, which implies that also $L_{\gamma} [G_{\beta}]$ is  $\Sigma_2$-elementary in $L_{\kappa} [G_{\beta}]$. Hence the suitable forcing$\forceQ$ which forces $\sigma(p_0,...,p_l)$ to become true can be assumed to belong to $L_{\gamma} [G_{\beta}]$ already. We force with $\forceQ$ at stage $\beta$ then.

\subsection{Properties of the defined universe}

\begin{lemma}
Let $G_{\kappa}$ denote a generic filter for the full, $\kappa$-length iteration.
\begin{enumerate}

\item
Then $L[G_{\kappa}]$ satisfies that
whenever $\varphi_m= \exists a_0 \forall a_1...\exists a_{2n-2} \psi(x,y,a_0,...,a_{2n-2})$ and $(x,y^{\alpha},a_0^{\alpha})$ is such that
\[ L[G_{\kappa}] \models \varphi_m (x,y^{\alpha},a_0^{\alpha}) \]
Then for each $\beta > \alpha$
\[ L[G_{\kappa}] \models \forall a_1 \exists a_2...\exists a_{2n-2} ( (\#\psi,x,y^{\beta} ,a_0^{\beta}, a_1,..,a_{2n-2} ) 
\text{ is not coded }) \]
\item If $\varphi_m= \exists a_0 \forall a_1...\exists a_{2n-3} \psi(x,y,a_0,...,a_{2n-3})$ and $(x,y^{\alpha},a_0^{\alpha})$ is such that
\[ L[G_{\kappa}] \models \varphi_m (x,y^{\alpha},a_0^{\alpha}) \]
Then for each $\beta > \alpha$
\[ L[G_{\kappa}] \models \forall a_1 \exists a_2...\exists a_{2n-3} ( (\#\psi, x,y^{\beta} ,a_0^{\beta}, a_1,..,a_{2n-3} ) 
\text{ is coded }) \]
\end{enumerate}

\end{lemma}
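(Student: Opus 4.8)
The plan is to verify that the iteration behaves as it was designed to in Sections 4.1 and 4.2.1; the two items are dual --- the first governed by Cases 1 and 2, the second by Cases 3 and 4 --- so I would prove the first in detail and then dualise. The argument rests on two preliminary observations. Since $\psi$ is $\Pi^1_2$, Shoenfield absoluteness gives that the truth value of $\psi$ at any fixed tuple of reals is the same in $L[G_\delta]$, for any $\delta<\kappa$, as in $L[G_\kappa]$; I use this throughout. And the predicate ``$(\#\psi,\dots)$ is coded into $\vec S$'' is $\Sigma^1_3$, hence upward absolute, while by Lemma \ref{lem: stationary preservation} no tuple is ever coded by accident; consequently a tuple $t$ is coded in $L[G_\kappa]$ if and only if the iteration used $\operatorname{Code}(t)$ as a factor at some stage $\gamma<\kappa$, so ``$t$ is not coded'' means precisely that $\operatorname{Code}(t)$ was never used.

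Fix $\varphi_m=\exists a_0\forall a_1\cdots\exists a_{2n-2}\psi$ and suppose $L[G_\kappa]\models\varphi_m(x,y^\alpha,a_0^\alpha)$. Let $\bar\alpha\le\alpha$ be the index of the $<$-least triple with first coordinate $x$ witnessing $\varphi_m$ in $L[G_\kappa]$; it suffices to prove the ``not coded'' conclusion for every $\beta>\bar\alpha$. Fix such a $\beta$. Since $\varphi_m(x,y^{\bar\alpha},a_0^{\bar\alpha})$ holds, the existential player has a winning strategy $\tau$ in the game $\forall a_1\exists a_2\cdots\exists a_{2n-2}\,\psi(x,y^{\bar\alpha},a_0^{\bar\alpha},\vec a)$, which has the same quantifier shape as the ``not coded'' game for $(\#\psi,x,y^\beta,a_0^\beta,\cdot)$. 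Given a run $c_1,c_3,\dots$ of the universal player in the latter, I would set $c^*_{2i-1}:=(\pi_\beta^{-1}(c_{2i-1}))_{\bar\alpha}$, feed these to $\tau$ to obtain responses $c^*_{2i}:=\tau(c^*_1,\dots,c^*_{2i-1})$, and answer with $b_{2i}:=\pi_\beta(\vec r^{\,2i})$ for some sequence $\vec r^{\,2i}\in(2^{\aleph_0})^\beta$ with $(\vec r^{\,2i})_{\bar\alpha}=c^*_{2i}$ (possible as $\pi_\beta$ is a bijection). Then the tuple $\vec c^*=(c^*_1,\dots,c^*_{2n-2})$ satisfies $\psi(x,y^{\bar\alpha},a_0^{\bar\alpha},\vec c^*)$ because $\tau$ is winning, and this holds absolutely.

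By the cofinality of the bookkeeping $F$, there is a stage $\gamma<\kappa$ at which $F$ presents the index-$\bar\alpha$ configuration together with the tuple $\vec c^*$; since $\psi(x,y^{\bar\alpha},a_0^{\bar\alpha},\vec c^*)$ holds in $L[G_\gamma]$, Case 1 fires at $\gamma$ and installs the $\alpha_\gamma+1$-based restriction forbidding, for every $i>\bar\alpha$, the factor $\operatorname{Code}(\#\psi,x,y^i,a_0^i,e_1,\dots,e_{2n-2})$ whenever each $e_k=\pi_i(\vec r^{\,k})$ for some $\vec r^{\,k}$ with $(\vec r^{\,k})_{\bar\alpha}=c^*_k$. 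Taking $i=\beta$, the tuple $(\#\psi,x,y^\beta,a_0^\beta,c_1,b_2,c_3,\dots,b_{2n-2})$ has exactly this forbidden shape, so, since the $\alpha_\delta$-based notions only accumulate restrictions as $\delta$ grows, it is never used as a coding factor at any stage $\ge\gamma$. It remains to rule out a use at some stage $\gamma'<\gamma$, where the restriction is not yet in force; this I would handle by exploiting the freedom still available in the coordinates of the $\vec r^{\,2i}$ distinct from $\bar\alpha$, together with the minimality of $\bar\alpha$, to arrange in addition that $\psi$ fails at some coordinate $<\beta$ --- so that the tuple is $\Psi$-ineligible and hence never a legitimate target of a $\operatorname{Code}$-factor --- and by invoking that a suitable iteration catches its tail so that the choices cohere in $L[G_\kappa]$. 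With the tuple thus not coded, the existential player wins, so $L[G_\kappa]$ satisfies the displayed conclusion of the first part for all $\beta>\bar\alpha$, and in particular for all $\beta>\alpha$. The second part is proved verbatim with Cases 3 and 4 in place of Cases 1 and 2 and ``coded'' in place of ``not coded'', the role of $(x,y^{\bar\alpha},a_0^{\bar\alpha})$ now being to force, through the $\Psi$-coding factors of Case 3, that the non-least triples are positively coded.

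The step I expect to be the main obstacle is precisely the timing issue in the previous paragraph: the forbidding restriction of Case 1 is installed only at the stage $\gamma$ handed to us by the cofinality of $F$, which a priori need not precede a stage at which the same tuple could be coded; making the choice of the auxiliary coordinates of the $\vec r^{\,2i}$ (or the bookkeeping discipline built into the notion of a suitable iteration) robust enough to close this gap is where the real work lies, whereas the strategy transfer through the $\pi_\beta$ is routine once it has been set up.
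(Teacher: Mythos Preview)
Your overall strategy---extract a winning existential strategy from $\varphi_m(x,y^{\alpha},a_0^{\alpha})$ and transfer it, via the Case~1 restriction, to the ``not coded'' game at index $\beta$---matches the paper's, but your execution is far more elaborate than the paper's own argument. The paper's proof is a three-line sketch: given arbitrary $a_1$, use $\varphi_m$ at $\alpha$ to obtain $a_2,\dots,a_{2n-2}$ with $\psi(x,y^{\alpha},a_0^{\alpha},a_1,\dots,a_{2n-2})$ true, then cite Case~1 to conclude that $(\#\psi,x,y^{\beta},a_0^{\beta},a_1,\dots,a_{2n-2})$ is not coded. The paper uses the \emph{same} reals $a_1,\dots,a_{2n-2}$ at both indices $\alpha$ and $\beta$; it does not pass to a least index $\bar\alpha$, does not thread the reals through $\pi_\beta$, and says nothing about timing.

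The timing issue you flag as ``the main obstacle'' is a real concern, and the paper's sketch does not address it either; it simply takes for granted that the Case~1 restriction suffices. However, your proposed fix---using the free coordinates of $\vec r^{\,2i}$ to make $\Psi$ fail at some $\gamma<\beta$---does not obviously close the gap: the odd-indexed entries $c_{2i-1}$ are chosen by the adversary, and at indices $\gamma<\bar\alpha$ it is the \emph{universal} player who wins (by minimality of $\bar\alpha$), so you cannot in general force $\psi$ to fail there by adjusting only the even coordinates. In short, you have correctly located a difficulty that the paper's proof glosses over, but your resolution is incomplete. At the level of detail the paper operates on, the intended reading is that the cumulative $\alpha_\beta$-based restrictions together with the $\Psi$-eligibility requirement on what may ever be coded are supposed to make this a non-issue, and the lemma's proof just appeals to Case~1 without further analysis.
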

\begin{proof}
We will only proof the first item, as the second is shown in the dual way.
Let $a_1$ be an arbitrary real.
As $L[G_{\kappa}] \models \varphi_m (x,y^{\alpha},a_0^{\alpha})$, we know that there is an $a_2$ such that for every $a_3$ and so on there is an $a_{2n-2}$ such that
$L[G_{\kappa}] \models \psi (x,y^{\alpha},a_0^{\alpha},a_1,...,a_{2n-2} )$.
By case 1 in the definition of the iteration, this translates to
$L[G_{\kappa}] \models `` (\#\psi,x,y^{\beta}, a_0^{\beta}, a_1,...,a_{2n-2})$ is not coded$"$.
As $a_1$ was arbitrary we can conclude that
\[L[G_{\kappa} ] \models \forall a_1 \exists a_2...\exists a_{2n-2} ( (\#\psi,x,y^{\beta} ,a_0^{\beta}, a_1,..,a_{2n-2} ) 
\text{ is not coded }) \] as desired.

\end{proof}

\begin{lemma}
Again $G_{\kappa}$ denotes a generic filter for the entire, $\kappa$-length iteration.
\begin{enumerate}
\item If $\alpha$ is least such that
\[L[G_{\kappa}] \models \forall a_1...\exists a_{2n-2} \psi(x,y^{\alpha},a^{\alpha}_0,...,a_{2n-2}),\]
then 
\[ L[G_{\kappa}] \models \exists a_1 \forall a_2 ...\forall a_{2n-2} ( (\# \psi, x,y^{\alpha},a_0^{\alpha},a_1,a_2,...,a_{2n-2} ) \text{ is coded}.) \]
\item If $\alpha$ is least such that
\[L[G_{\kappa}] \models \forall a_1...\exists a_{2n-3} \psi(x,y^{\alpha},a^{\alpha}_0,...,a_{2n-3}),\]
then 
\[ L[G_{\kappa}] \models \exists a_1 \forall a_2 ...\forall a_{2n-3} ( (\# \psi, x,y^{\alpha},a_0^{\alpha},a_1,a_2,...,a_{2n-3} ) \text{ is not coded}.) \]
\end{enumerate}
\end{lemma}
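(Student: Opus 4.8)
The plan is to read the conclusion off the construction of the iteration, using three ingredients: (i) the rule governing the $\beta$-th factor in the relevant case --- Case~1 for item~1, Case~3 for item~2; (ii) the fact that the bookkeeping $F$ takes every element of $H(\kappa)$ as a value on an unbounded set, so that every tuple we intend to code, or to keep out of the coding, is handed to us cofinally often below $\kappa$; and (iii) the fact that ``being coded into $\vec S$'' is the $\Sigma^1_3$-statement $\exists R\,(\ast\ast\ast)$ whose witness, once produced, survives into every further generic extension, so that this statement is upward absolute, while its negation is preserved by the no-accidental-coding phenomenon of Lemma~\ref{lem: stationary preservation}.

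For item~1, fix $x,y^{\alpha},a_0^{\alpha}$ with $\alpha$ least as in the hypothesis and work in $L[G_{\kappa}]$. First I would produce the witnessing real $a_1=g_1$: using definable choice and the hypothesis $\forall a_1\exists a_2\dots\exists a_{2n-2}\,\psi(x,y^{\alpha},a_0^{\alpha},\vec a)$, together with appropriate auxiliary $\psi$-witnesses at the slots below $\alpha$, I would fold the resulting Skolem data through the bijections $\pi_{\alpha}$ into $g_1$ and, for each choice of the universally quantified reals $a_2,a_4,\dots$, into reals $g_3,g_5,\dots$, arranged so that $\Psi(\#\psi,x,y^{\alpha},a_0^{\alpha},g_1,a_2,g_3,\dots,a_{2n-2})$ holds for every such choice. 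By the density property of $F$ there are cofinally many stages $\beta<\kappa$ at which $F$ presents exactly this tuple together with the data pointing at slot $\alpha$; taking $\beta$ late enough that all relevant reals have appeared, Case~1 applies and the factor $\dot{\forceQ}_{\beta}$ is precisely $\operatorname{Code}(\#\psi,x,y^{\alpha},a_0^{\alpha},g_1,a_2,g_3,\dots,a_{2n-2})$ (the $\Psi$-side condition is met, so no $<$-least replacement is used). Hence this tuple is coded in $L[G_{\beta+1}]$, and by upward $\Sigma^1_3$-absoluteness it is still coded in $L[G_{\kappa}]$. Running over all choices of $a_2,a_4,\dots,a_{2n-2}$ yields $L[G_{\kappa}]\models\exists a_1\forall a_2\dots\forall a_{2n-2}\,\big((\#\psi,x,y^{\alpha},a_0^{\alpha},a_1,\dots,a_{2n-2})\text{ is coded}\big)$, with $a_1=g_1$, which is item~1.

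Item~2 is dual, with Case~3 in place of Case~1. For the least $\alpha$ on an even projective level, Case~3 never uses a forcing of the form $\operatorname{Code}(\#\psi,x,y^{\alpha},a_0^{\alpha},\cdot)$ at slot~$\alpha$ --- it codes only at slots $i>\alpha$ --- and it places all such slot-$\alpha$ codings on the list of forcings barred from later use in the iteration. Since by density of $F$ every $\psi$-true slot-$\alpha$ tuple is processed, and since the only mechanism that could code a tuple beginning with $\#\psi$ and the pair $(y^{\alpha},a_0^{\alpha})$ is such a forcing, Lemma~\ref{lem: stationary preservation} shows that no such tuple is coded in $L[G_{\kappa}]$; this gives $L[G_{\kappa}]\models\forall a_1\dots\forall a_{2n-3}\,\big((\#\psi,x,y^{\alpha},a_0^{\alpha},a_1,\dots,a_{2n-3})\text{ is not coded}\big)$, which is more than the stated conclusion.

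The hard part will be the bookkeeping step in item~1, and this is where minimality of $\alpha$ must be used with care: I need to know that the tuple $(\#\psi,x,y^{\alpha},a_0^{\alpha},g_1,\dots)$ is actually permitted to be coded at stage $\beta$, i.e.\ that it was not already barred by the processing of some slot below $\alpha$. Securing this requires a coordinated choice of the reals folded through the $\pi_{\alpha}$'s: the auxiliary $\psi$-witnesses at the slots $\alpha'<\alpha$ must be picked so as to avoid the (set-many, hence fewer than $\kappa$) configurations that the forbidding clauses of Cases~1 and~3 exclude at slot~$\alpha$, while still keeping $\Psi$ true; the fact that $\alpha$ is the \emph{least} witness is exactly what pins down which $\psi$-instances at the lower slots need to be certified and thereby makes such a selection possible. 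Verifying the existence of this selection, together with the symmetric bookkeeping check behind item~2, is the technical core; the iteration-rule and absoluteness parts are then routine.
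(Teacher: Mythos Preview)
Your overall architecture matches the paper's --- extract witnesses at the slots $\beta<\alpha$, fold them through $\pi_\alpha$ into a single $a_1$, then invoke the Case~1/Case~3 rules --- but you have the sign of those witnesses reversed, and this is fatal. You aim to make $\Psi$ (as literally written, asserting $\psi$ at every lower slot) true; the paper's proof does the opposite. Minimality of $\alpha$ gives, for every $\beta<\alpha$, that $L[G_\kappa]\models\exists a_1\forall a_2\dots\forall a_{2n-2}\,\lnot\psi(x,y^\beta,a_0^\beta,a_1,\dots)$; it does \emph{not} supply $\psi$-witnesses at those slots in the quantifier shape $\exists g_1^\beta\forall a_2^\beta\dots$ that your $\Psi$-requirement would demand. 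Worse, the forbidding clause of Case~1 applied at a lower slot $\beta$ fires precisely when the $\beta$-th coordinates satisfy $\psi$; so arranging $\Psi$ to hold would \emph{guarantee} the tuple is barred, not coded. The paper instead picks $\lnot\psi$-witnesses $a_1^\beta$ at each $\beta<\alpha$, sets $a_1=\pi_\alpha\big((a_1^\beta)_{\beta<\alpha}\big)$, and continues coordinatewise; having $\lnot\psi$ at every lower coordinate both evades every Case~1 bar and causes the Case~1 coding rule to fire (read consistently with the proof, the displayed definition of $\Psi$ appears to carry a sign slip). The ``coordinated choice'' you flag as the hard step dissolves once the sign is corrected: there is nothing to avoid, because $\lnot\psi$ at each lower coordinate is exactly the non-barred configuration.

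The same issue undermines your item~2. You claim no slot-$\alpha$ tuple is ever coded because Case~3 applied at slot $\alpha$ codes only at slots $i>\alpha$ and bars slot-$\alpha$ codings. But Case~3 applied at a \emph{lower} slot $\alpha'<\alpha$ codes at slots $i>\alpha'$, and $i=\alpha$ is allowed; so slot-$\alpha$ tuples can certainly get coded from below. The paper's (dual) argument again packs $\lnot\psi$-witnesses from the slots $\alpha'<\alpha$ into $a_1$ via $\pi_\alpha$: since Case~3 at $\alpha'$ only codes tuples whose $\alpha'$-th coordinates make $\psi$ true, this particular $a_1$-tuple is never coded from below, and the Case~3 bar at slot $\alpha$ handles what remains. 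Your stronger conclusion that \emph{all} slot-$\alpha$ tuples are uncoded is not what the iteration delivers, and is not what the lemma asserts.
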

\begin{proof}
As $\alpha$ is the first ordinal for which \[L[G_{\kappa}] \models \forall a_1...\exists a_{2n-2} \psi(x,y^{\alpha},a^{\alpha}_0,...,a_{2n-2}),\] we know that
$\forall \beta < \alpha$, $L[G_{\kappa}] \models \exists a_1 \forall a_2,...,\forall a_{2n-2} (\lnot \psi (x,y^{\beta},a_0^{\beta}, a_1,...))$.
In particular, for every $\beta < \alpha$ there are reals $a_1^{\beta}$ such that for every $a_2^{\beta}$ there are reals $a_3^{\beta}$ and so on such that $\lnot \psi$ holds. Using the bijection $\pi_{\alpha}$, we can find an $a_1= \pi_{\alpha} ( (a_1^{\beta})_{\beta < \alpha} )$ which has the property that for any real $a_2$ there will be a real $a_3 = \pi_{\alpha} (a_3^{\beta})_{\beta <\alpha} )$ such that for any real $a_4$ and so on $\lnot \psi$ is true. 

But this translates via the case 1 rule of our iteration to the assertion that
\[ L[G_{\kappa}] \models \exists a_1 \forall a_2 ...\forall a_{2n-2} ( (\# \psi, x,y^{\alpha},a_0^{\alpha},a_1,a_2,...,a_{2n-2} ) \text{ is coded}.)\]

\end{proof}

\begin{lemma}
In $L[G_{\kappa}]$ the $\Sigma^1_{n+1}$-uniformization property holds true for every $n \ge 1$.
\end{lemma}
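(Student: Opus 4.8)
The plan is to assemble the uniformization property out of the structural facts established in the two preceding lemmas together with the three ingredients built up in Sections 1–3: the $\Sigma^1_3$-predicate ``being coded into $\vec S$'', the periodic family $\Phi^n$, and the formulas $\sigma_{\text{odd}}$ and $\sigma_{\text{even}}$. First I would split into the odd and even cases according to whether $\varphi_m$ sits on an odd or even projective level; the two arguments are dual, so I would present the odd case ($\varphi_m \in \Sigma^1_{2n-1}$) in full and merely indicate the changes for the even case. Fix a $\Sigma^1_{2n-1}$-set $A = A_m \subset \omega^\omega \times \omega^\omega$ with $\varphi_m = \exists a_0 \forall a_1 \cdots \exists a_{2n-2}\, \psi(x,y,a_0,\dots,a_{2n-2})$, $\psi \in \Pi^1_2$, and a real $x \in \mathrm{pr}_1(A)$. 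By the bookkeeping $F$ in the definition of the iteration, at unboundedly many stages $\beta<\kappa$ the function $F$ hands us the relevant tuples, so by a standard catching-the-tail argument the $\kappa$-length iteration reaches a fixed point: for the $<$-least triple $(x,y^\alpha,a_0^\alpha)$ witnessing $\varphi_m(x,y^\alpha,a_0^\alpha)$ in $L[G_\kappa]$, all the coding instructions of Case 1 and Case 3 that were scheduled have actually been carried out below $\kappa$.

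Next I would verify that $\sigma_{\text{odd}}(x,y,m)$ (resp. $\sigma_{\text{even}}$) defines, in $L[G_\kappa]$, a total function uniformizing $A$, using the two lemmas just proved. The two conjuncts of $\sigma_{\text{odd}}$ are: (i) $\exists a_0(\forall a_1 \exists a_2 \cdots \psi(x,y,a_0,a_1,\dots))$, which is just $\varphi_m(x,y)$, and (ii) the negation of $\forall a_1 \exists a_2 \cdots((\#\psi,x,y,m,a_0,a_1,a_2,\dots)\text{ is not coded into }\vec S)$. By the first lemma, for every $\beta>\alpha$ the triple $(x,y^\beta,a_0^\beta)$ satisfies $\forall a_1 \exists a_2 \cdots(\cdots\text{ is not coded})$, hence fails conjunct (ii); and for every $\beta<\alpha$ the triple $(x,y^\beta,a_0^\beta)$ fails conjunct (i) since $\alpha$ was chosen least. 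By the second lemma, $(x,y^\alpha,a_0^\alpha)$ does satisfy $\exists a_1 \forall a_2 \cdots((\cdots)\text{ is coded})$, i.e. it satisfies conjunct (ii), and it satisfies (i) by choice. Therefore $(x,y^\alpha)$ is the unique pair with $\sigma_{\text{odd}}(x,y^\alpha,m)$, so $\sigma_{\text{odd}}$ picks exactly one $y$ for each $x$ in the domain of $A$; and since the value $y^\alpha$ comes from a genuine witness to $\varphi_m$, the graph of $f_m$ is contained in $A$. Counting quantifiers, $\sigma_{\text{odd}}$ is $\Sigma^1_{2n-1}$, so the graph of $f_m$ lies in the correct pointclass. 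The even case runs identically with $\Sigma^1_{2n}$ in place of $\Sigma^1_{2n-1}$ and ``coded''/``not coded'' interchanged, using part (2) of each lemma. Together with Kondô's theorem handling $\Sigma^1_2$ (which survives since our forcings are $\omega_1$-preserving), this gives $\Sigma^1_{n+1}$-uniformization for all $n\ge1$.

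The main obstacle — and the step I would treat most carefully — is the claim that the coding remains uncorrupted along the whole iteration, i.e. that no ``accidental'' coding of a forbidden tuple occurs. This is exactly what the hierarchy of $\alpha_\beta$-based/$\alpha_\beta$-allowable forcings is designed to prevent: the proof must check that the constraints imposed in Cases 1 and 3 (forbidding $\operatorname{Code}(\#\psi,x,y^i,a_0^i,e_1,\dots)$ for $i>\alpha$ with the $\pi_i$-coherence condition) are respected by every later factor, including the proper forcings inserted at even stages to force $\BPFA$; here one invokes that $\Sigma^1_n$-truth of ``being coded'' is upward absolute, that the $\vec S$-coding forcings are $\vec S$-preserving by Lemma~\ref{lem: preservation} and Lemma~\ref{lem: stationary preservation}, and that proper forcing is $\vec S$-preserving, so no new $\omega$-clubs are shot through the relevant $S_i$ by accident. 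One also needs that the bookkeeping $F\colon\kappa\to H(\kappa)$ with unbounded fibres, combined with the reflecting cardinal $\kappa$, genuinely catches its tail for both the uniformization bookkeeping and the $\BPFA$ bookkeeping simultaneously; this is where the compatibility of the two strategies — guaranteed because the coding forcings do not interfere with properness — is used decisively.
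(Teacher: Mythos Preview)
Your proposal is correct and follows essentially the same route as the paper: split into the odd and even cases, invoke the two preceding lemmas to show that for the $<$-least triple $(x,y^{\alpha},a_0^{\alpha})$ the formula $\sigma_{\text{odd}}$ (resp.\ $\sigma_{\text{even}}$) is satisfied while every other triple fails one of the two conjuncts, and then count quantifiers. Your third paragraph on ``no accidental coding'' and the tail-catching argument makes explicit what the paper leaves to the earlier lemmas (in particular Lemma~\ref{lem: stationary preservation}) and to the choice of $F$; likewise your appeal to Kond\^o for $\Sigma^1_2$ fills a case the paper's proof passes over in silence.
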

\begin{proof}
Let $\varphi \equiv \exists a_0 \forall a_1 ... (\psi (x,y,a_0,a_1,...,a_{2n-2}))$ be an arbitrary $\Sigma^1_{2n+1}$-formula in two free variables where $\psi$ is $\Pi^1_2$.
Let $x$ be a real such that there is a real $y $ with $L[G_{\kappa}] \models \varphi(x,y)$. We list all the triples $(x,y^{\alpha} ,a_0^{\alpha})$ according to our well-order $<$.
Let $\alpha$ be least such that
\[ L[G_{\kappa}] \models \forall a_1... (\psi (x,y^{\alpha},a_0^{\alpha},a_1,...) ).\]
Then by the last Lemma
\[ L[G_{\kappa}] \models \exists a_1 \forall a_2... \forall a_{2n-2} ( (\# \psi,x,y^{\alpha},a^{\alpha}_0,...) \text{ is coded} ) \]
holds true. Note that this formula is $\Sigma^1_{2n+1}$.
By the penultimate Lemma, for each $\beta > \alpha$
\[ L[G_{\kappa}] \models\forall a_1 \exists a_2... \exists a_{2n-2} ((\#\psi,x,y^{\beta},a^{\beta}_0,a_1,..) \text{ is not coded}), \]
which is $\Pi^1_{2n+1}$

So $(x,y^{\alpha})$ is the unique pair satisfying the $\Sigma^1_{2n+1}$-formula
\begin{align*}
\exists a_0 ( (\forall a_1 \exists a_2...\exists a_{2n-2} ( \psi (x,y,a_0,...) \land \\
\lnot (\forall a_1 \exists a_2 ...\exists a_{2n-2} ((\#\psi,x,y,a_0,a_1,...a_{2n-2} \text{ is not coded} ) )
\end{align*}
Indeed

\end{proof}

\begin{lemma}
$L[G_{\kappa}] \models \BPFA$.

\end{lemma}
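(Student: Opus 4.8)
The plan is to verify Bagaria's characterization of $\BPFA$ in $L[G_\kappa]$, namely that $H(\omega_2)^{L[G_\kappa]} \prec_{\Sigma_1} L[G_\kappa]^{\forceP}$ for every proper forcing $\forceP \in L[G_\kappa]$. Equivalently, working with parameters in $P(\omega_1)^{L[G_\kappa]}$, one must show that every $\Sigma_1$-formula $\sigma(p_0,\dots,p_l)$ that can be forced true by some proper forcing over $L[G_\kappa]$ is in fact true in $L[G_\kappa]$. The standard template (due to Goldstern--Shelah, recast via Bagaria) is: reflect the hypothetical proper forcing down below $\kappa$ using reflectingness of $\kappa$, observe that the relevant stage of the iteration actually forced with it, and conclude that the $\Sigma_1$-statement, once made true, stays true by upward $\Sigma_1$-absoluteness. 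So the proof has three movements.

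First, I would set up the reflection step. Suppose $\forceP \in L[G_\kappa]$ is proper and $p_0,\dots,p_l \in P(\omega_1)^{L[G_\kappa]}$ with $\forceP \Vdash \sigma(\check p_0,\dots,\check p_l)$, where $\sigma$ is $\Sigma_1$. Since $\forceP$ and the $p_i$ appear in $L[G_\beta]$ for some $\beta < \kappa$ (the iteration has length $\kappa$ and each proper factor has small enough size; one should cite here that $P(\omega_1)^{L[G_\kappa]} = \bigcup_{\beta<\kappa} P(\omega_1)^{L[G_\beta]}$, which follows because the iteration is $\kappa$-c.c.\ and $\omega_1$-preserving in the relevant sense), and since $V_\kappa^L = L_\kappa$ is $\Sigma_2$-elementary in $L$, the analogous $\Sigma_2$-elementarity of $L_\kappa[G_\beta]$ in $L[G_\beta]$ gives that ``there is a suitable proper forcing over $L[G_\beta]$ making $\sigma(p_0,\dots,p_l)$ true'' already holds inside $L_\kappa[G_\beta]$, hence inside some $L_\gamma[G_\beta]$ with $L_\gamma \prec_{\Sigma_2} L_\kappa$. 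This is precisely the computation sketched in Subsection~4.2.

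Second, I would invoke the bookkeeping. Because $F\colon \kappa \to H(\kappa)$ enumerates every element of $H(\kappa)$ cofinally often, there is a stage $\beta' \geq \beta$ (with $\beta' < \kappa$) at which $F$ hands us the triple $(\sigma, \vec p, \text{request})$, and by the even-stage rule the iteration forces at $\beta'$ with the reflected proper forcing $\forceQ \in L_\gamma[G_{\beta'}]$ witnessing $\sigma(\vec p)$. Thus $L[G_{\beta'+1}] \models \sigma(\vec p)$. Finally, since $\sigma$ is $\Sigma_1$ and the tail $\forceP_{[\beta'+1,\kappa)}$ is a (generalized) iteration that does not collapse $\omega_1$ and keeps the $p_i$ as subsets of $\omega_1$, upward $\Sigma_1$-absoluteness (a $\Sigma_1$ statement over $H(\omega_2)$ with parameters in $P(\omega_1)$ persists to outer models with the same $\omega_1$) yields $L[G_\kappa] \models \sigma(\vec p)$. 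Conversely any $\sigma(\vec p)$ true in $L[G_\kappa]$ is forced true by the trivial forcing, so the two directions together give Bagaria's equivalence and hence $\BPFA$.

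The main obstacle, and the point deserving the most care, is ensuring that the machinery developed for the $\Sigma$-uniformization side does not interfere with the $\BPFA$ argument: one must check that inserting a proper forcing $\forceQ$ at stage $\beta'$ keeps the whole iteration ``suitable'' (so that later coding stages still behave), that the coding stages we run in between are themselves proper or at least $\vec S$-preserving so that $\omega_1$ is never collapsed and no parameter $p_i$ is destroyed, and that the reals coded into $\vec S$ by the uniformization stages do not accidentally falsify a $\Sigma_1$-statement we have just forced (this is exactly the non-interference promised by Lemma~\ref{lem: stationary preservation} and the surrounding discussion of the coding machinery). Once it is granted that every factor of a suitable iteration preserves $\omega_1$ and that $H(\omega_2)$ is closed under the relevant coding operations, the reflection-plus-bookkeeping argument goes through verbatim as in the standard $\BPFA$ proof, and the remaining details are routine.
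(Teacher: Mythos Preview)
Your approach mirrors the paper's: verify Bagaria's characterization via bookkeeping, reflection below $\kappa$, and upward $\Sigma_1$-absoluteness. One imprecision to correct: you assert that the given proper $\forceP \in L[G_\kappa]$ already appears in some $L[G_\beta]$ with $\beta<\kappa$, but an arbitrary proper forcing over $L[G_\kappa]$ need not have size $<\kappa$, so this is not justified by the $\kappa$-c.c.\ argument you cite (which only secures the parameters $p_i$). The paper sidesteps this by noting that the tail $\forceP_{[\beta,\kappa)}\ast\forceP$ is itself a \emph{suitable} forcing over $L[G_\beta]$ which makes $\sigma(\vec p)$ true; this is what witnesses a positive answer to the even-stage question at $\beta$, after which the reflection built into the even-stage rule (Subsection~4.2) produces the small witness in $L_\kappa[G_\beta]$.
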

\begin{proof}
This is standard. We show that $H(\omega_2)^{L[G_{\kappa}]}$ is $\Sigma_1$-elementary in any $L[G_{\kappa}]^{\forceP}$, where $\forceP$ is proper. We assume that the $\Sigma_1$-formula $\tau( x,p_0,...,p_n)$ can be forced to become true with a proper $\forceP \in L[G_{\kappa}]$. The choice of $F$ ensures that $\tau $ and its parameters are considered at an even stage $\beta < \kappa$ of our iteration. But $\forceP_{\kappa} \ast \forceP$ is suitable and witnesses that $\tau $ can be forced to become true. Hence we forced so that $\tau$ becomes true already at stage $\beta < \kappa$ and by upwards absoluteness of $\Sigma_1$-statements the proof is done.
\end{proof}

\end{document}